
\documentclass[10pt,intlimits,draft]{amsart}
\usepackage{amsfonts,amsmath,amsthm,IMjournal}
\usepackage{enumerate}
\usepackage{mathrsfs}

\def\semis{\mathscr{P}}
\def\ee{\mathrm{e}}

\def\KKK{\mathscr{K}}
\def\sop{\text{stop}}

\def\Ce{\text{\rm C}}
\def\Cb{\Ce_{\text{\rm b}}}

\def\co{\text{\rm c}_{\text{0}}}
\def\taum{\tau_{\text{\rm m}}}
\def\tauc{\tau_{\text{\rm c}}}

\def\badj{^\circ}
\def\dd{\:\mathrm{d}}

\def\Ell{\mathrm{L}}
\def\MM{\text{\rm M}}

\def\lm{\rightarrow}
\def\NN{\mathbb{N}}
\def\defeq{\mathrel{:=}}
\def\rg{\mathop{\mathrm{rg}}}

\def\LL{\mathscr{L}}
\def\RR{\mathbb{R}}

\def\ordplus{{\stackrel{.}{+}}}

\theoremstyle{plain}
\newtheorem{theorem}{Theorem}[section]
\newtheorem{proposition}[theorem]{Proposition}
\newtheorem{lemma}[theorem]{Lemma}%
\theoremstyle{definition}
\newtheorem*{remark*}{Remark}
\newtheorem{example}[theorem]{Example}

\newtheorem{hypothesis}{Hypothesis}

\newtheorem{definition}[theorem]{Definition}

\begin{document}
\title{Adjoint bi-continuous semigroups and semigroups on the space of measures}
\author{\|B\'alint |Farkas|, Darmstadt}

\begin{abstract}
For a given bi-continuous semigroup $(T(t))_{t\geq 0}$ on a Banach space $X$ we define its adjoint on an appropriate closed subspace $X\badj$ of the norm dual $X'$. Under some abstract conditions this adjoint semigroup is again bi-continuous with respect to the weak topology $\sigma(X\badj,X)$. An application is the following: For $\Omega$ a Polish space we consider operator semigroups on the space $\Cb(\Omega)$ of bounded, continuous functions (endowed with the compact-open topology) and on the space $\MM(\Omega)$ of boun\-ded Baire measures (endowed with the weak$^*$-topology). We show that bi-continuous semigroups on $\MM(\Omega)$ are precisely those that are adjoints of a bi-continuous semigroups on $\Cb(\Omega)$. We also prove that the class of bi-continuous semigroups on $\Cb(\Omega)$ with respect to the compact-open topology coincides with the class of equi\-con\-ti\-nuous semigroups with respect to the strict topology. In general, if $\Omega$ is not Polish space this is not the case.
\end{abstract}

%
%

\keywords
Not strongly continuous semigroups,  bi-continuous semigroups; adjoint semigroup; mixed-topology, strict topology; one-parameter semigroups on the space of measures
\endkeywords

\subjclass
47D06, 47D03, 47D99; 46A03
\endsubjclass

\maketitle

\section{Introduction}%
\label{s:intro}

The probably simplest example of a semigroup on the space $\Cb(\RR)$, namely the shift semigroup, fails to be strongly continuous and even measurable with respect to the sup-norm. To overcome this and the probably the more important fact, namely the failure of strong continuity of many transition semigroups, several different approaches have been developed, such as those of $\pi$-semigroups by Priola \cite{priola:1999} or  weakly continuous semigroups by Cerrai \cite{cerrai}. One could even simply consider other locally convex topologies on $\Cb(\RR)$ than the sup-norm-topology as, e.g., done by Dorroh, Neuberger \cite{dorroh/neuberger:1993}. A more recent abstract approach is that of \emph{integrable semigroups on norming dual pairs} due to Kunze, see \cite{kunze,kunze2} and \cite{kunze:arx2}). In this paper, we give preference to the notion of \emph{bi-continuous semigroups} initiated by K\"uhnemund \cite{Ku01,Ku03}.

\medskip The reason for giving distinction to this class of semigroups is the fact that there is already a vast abstract theory developed for them: there are generation, approximation  and perturbation results (see K\"uhnemund \cite{Ku01}, \cite{Ku03} and Farkas \cite{farkas-bdd,Fa04b}). Even the Hille--Phillips functional calculus was formulated in this setting, and was used to prove convergence rates for rational approximation schemes and for efficient Laplace inversion formulas, see Jara \cite{jara:2008}. More recently mean-ergodic theorems for bi-continuous semigroups have been studied by Albanese, Lorenzi, Manco \cite{ALM}. On the other hand, bi-continuous semigroups have appeared in several applications concerning parabolic and equations with unbounded coefficients on the space of bounded continuous functions. We mention here the papers: Albanese, Lorenzi, Manco \cite{ALM}, Albanese, Mangino \cite{AlbMan}, Es-Sarhir, Farkas \cite{EsFa1}, \cite{EsFa2}, Farkas, Lorenzi \cite{Luca}, Lorenzi, Zamboni \cite{LorenziZamboni}, Metafune, Pallara, Wacker \cite{MPW}. It is the aim of the present note to complement the existing theory of bi-continuous semigroups by the construction of the \emph{adjoint semigroup} (this is done in Section \ref{s:adj}).

\medskip
Beside these, recently semigroups on spaces of measures have been attracting much attention, see Manca \cite{manca:2008}, Lant, Thieme \cite{LT07}. It is not surprising, however, that such questions were addressed much earlier, for example by Sentilles \cite{sentilles:1970}, who studied operator semigroups of $\Cb(\Omega)$ (bounded continuous functions) and on $\MM(\Omega)$ (bounded measures), where $\Omega$ is a locally compact space. The important construction there was that of a \emph{strict-topology}, which will be also crucial in this paper (it is the topology considered by Dorroh, Neuberger in \cite{dorroh/neuberger:1993}). In this respect we will rely on the paper by Sentilles \cite{sentilles:1972}. In Section \ref{s:meas} below, the adjoint bi-continuous semigroup construction mentioned above will be applied to study bi-continuous semigroups on the space $\MM(\Omega)$ of bounded Baire measures. We further show that bi-continuous semigroups in some cases may be put in the theory of equi\-con\-ti\-nuous semigroups on locally convex spaces, but we also give an example, a rather pathological one, showing that this is not always possible (Section \ref{s:cnt}). Our result on the automatic equicontinuity of semigroups (Theorem \ref{thm:important}) has been obtained independently by M.~Kunze even in a more general situation, see \cite{kunze2}.

\medskip
Let us first we recall some terminology and set up the framework (see K\"uhnemund \cite{Ku01,Ku03}). For considering bi-continuous semigroups one needs a Banach space $(X,\|\cdot\|)$ which is endowed with an additional locally convex topology $\tau$. The two  topologies need somehow be connected, hence we assume the following:

\begin{hypothesis}\label{hyp:1}\hskip0em
\begin{enumerate}[(i)]
\item\label{enum:1:i} $\tau$ is Hausdorff and coarser than the norm-topology.
\item\label{enum:1:ii} The locally convex space $(X,\tau)$ is sequentially complete on $\tau$-closed, norm-boun\-ded sets.
\item\label{enum:1:iii} The dual space $(X,\tau)'$ is norming for $(X,\|\cdot\|)$, i.e.,
\begin{equation*}
\|x\|=\sup_{\varphi\in(X,\tau)' \atop \|\varphi\|\leq
1}|\varphi(x)|.
\end{equation*}
\end{enumerate}
\end{hypothesis}
\noindent Now, $\tau$-bi-continuous semigroups are defined as follows:
\begin{definition}\label{def:bicont}
A one-parameter semigroup $(T(t))_{t\geq 0}$ of bounded linear operators on a Banach space $X$ is called a ($\tau$-)\emph{bi-continuous
semigroup}, if
\begin{enumerate}[(i)]
\item the orbit $\RR_+\ni t\mapsto T(t)x$ is $\tau$-continuous for all $x\in X$ ($\tau$-strongly continuous),
\item $t\mapsto T(t)$ is a norm-bounded function, say, on $[0,1]$, in which case
it is exponentially bounded on $\RR_+$.
\item $(T(t))_{t\geq 0}$ is locally-bi-equi\-con\-ti\-nuous, which means
that for a norm-bounded $\tau$-null sequence $x_n$ the convergence
$T(t)x_n\to 0$ holds in the topology $\tau$ and uniformly in compact
intervals.
\end{enumerate}
\end{definition}

The main feature of this definition is that it mixes  \emph{properties with respect to two topologies}: the norm-topology of Banach spaces (thus it allows for norm-estimates) and a weaker notion of convergence. As in (iii) above, for functions $T:\RR_+\to\LL(X)$ we use the term ``\emph{locally ...}'',  if the property ``...'' is satisfied for operators $T(t)$, $t$ ranging over compact intervals of $\RR_+$.

Let us first present some examples for bi-continuous semigroups.
\begin{example}\label{ex:cb}
The illustrious examples  of bi-continuous semigroups are those on the space $\Cb(\Omega)$ when this space is endowed with the compact-open topology $\tauc$.  To be more specific, consider $\Omega$ a locally compact Hausdorff, or a metrisable topological space, N.B.~a completely regular space, (or even more generally a completely regular, $k_f$-space, i.e., a space $\Omega$ for which the continuity of a function $f:\Omega\to \RR$ is decided already on compact sets). The linear space of continuous and bounded functions $f:\Omega\to \RR$ becomes  a Banach space when if endowed with the supremum-norm $\|\cdot\|_\infty$. The additional topology that we consider is the \emph{compact-open topology $\tauc$} generated by the following family of semi-norms:
\begin{equation*}
\Bigl\{ p_K(f):=\sup_{x\in K}|f(x)|:\:K\subseteq \Omega\mbox{
compact}\Bigr\}.
\end{equation*}
It is trivial that for $X=\Cb(\Omega)$ and $\tau=\tauc$  Hypothesis \ref{hyp:1} is satisfied. Now some important examples of bi-continuous semigroups in this settings:
\begin{enumerate}[{\bf 1.}]
\item The shift semigroup on $\Cb(\RR)$ is bi-continuous for the compact-open topology.
\item If $\Omega$ is Polish space Dorroh and Neuberger  have studied semigroups $(T(t))_{t\geq 0}$ on $\Cb(\Omega)$ induced by jointly-continuous flows, see \cite{dorroh/neuberger:1993,dorroh/neuberger:1996}. K\"uhnemund \cite[Sec.~3.2]{Ku01} has shown that these semigroups are bi-continuous with respect to the topology $\tauc$.
\item Given $H$ a separable Hilbert space, Ornstein--Uhlenbeck semigroups on $\Cb(H)$ have proved to be bi-continuous semigroups with respect to the compact-open topology, see K\"uhnemund \cite[Sec.~3.3]{Ku01} (or \cite{Fa04b}).
\item Metafune, Pallara, Wacker proved  in \cite{MPW} that solutions of certain second order parabolic equations with unbounded coefficients give rise to $\tauc$-bi-continuous semigroups on $\Cb(\RR^d)$.
\end{enumerate}
There are many other instances of bi-continuous semigroups of this kind. Without completeness we mention the following  references: Albanese, Lorenzi, Manco \cite{ALM}, Lorenzi, Zamboni \cite{LorenziZamboni},  Es-Sarhir, Farkas \cite{EsFa1}, Farkas, Lorenzi \cite{Luca}.
\end{example}

The next example concerns the weak$^*$-topology on a dual of a Banach space (and includes, among others, the shift semigroup on $\Ell^\infty(\RR)$).

\begin{example}\label{e:KS}
Let $E$ be a Banach space $X=E'$ and $\tau=\sigma(E',E)$ the weak$^*$-topology. If $(T(t))_{t\geq 0}$ is strongly continuous semigroup on $E$ with respect to the norm, then its adjoint $(T'(t))_{t\geq 0}$ is a bi-continuous on $X$ with respect to the weak$^*$-topology (see \cite[Sec.~3.5]{Ku01}). It is a consequence of the Krein-\v Smulian Theorem (see \cite[Sec.~IV.6]{schaefer:1999}) and is shown in \cite{farkas-bdd} that if $E$ is separable, then every $\tau$-bi-continuous semigroup on $X$ is of this form. We show in Section \ref{s:cnt} that one cannot drop the separability assumption.
\end{example}

The last example illustrates that bi-continuous semigroups naturally appear in operator theory, too.
\begin{example}
Let $(T(t))_{t\geq0}$ and $(S(t))_{t\geq0}$ be $C_0$-semigroups (strongly continuous for the norm) on a Banach space $E$. Consider $X=\LL(E)$ endowed besides the operator norm also with the strong operator topology $\tau_{\sop}$.
It is obvious that Hypothesis \ref{hyp:1} is satisfied. The
\emph{implemented semigroup} $(U(t))_{t\geq0}$ is defined by
$$
U(t)\defeq L_{S(t)} R_{T(t)},
$$
where $L$ and $R$ stand for the left and the right multiplication by the indicated  linear operator, respectively.
It is easy to see that $(U(t))_{t\geq0}$ is a
semigroup on $X$ which is bi-continuous with respect to $\tau_{\sop}$ (see K\"uhnemund \cite[Sec. 3.4]{Ku01} and Alber \cite{Alber}).
\end{example}

\subsection*{The mixed-topology}
To close the introduction we present a construction for the so-called mixed topology, which allows us to handle the ``two-topologies feature'' of bi-continuous semigroups by means of a single locally convex topology. Let $(X,\tau)$ be as in Hypothesis \ref{hyp:1} and $\semis$ be a family of seminorms determining $\tau$ such that $p\leq \|\cdot\|$ for all $p\in \semis$, and $\|\cdot\|=\sup_{p\in \semis} p(x)$. For $(p_n)\subseteq \semis$ and $(a_n)\in \co$, $a_n\geq 0$ (positive null-sequence) consider the
seminorm
$$
\Tilde p_{(p_n,a_n)}(x):=\sup_{n\in\NN} a_n p_n(x).
$$
Let $\taum$ be the locally convex topology, called the \emph{mixed topology}, determined by the family of
seminorms
$$
\Tilde \semis\defeq\bigl\{\Tilde p_{(p_n,a_n)}:(p_n)\in\semis,
(a_n)\in \co,\:a_n\geq 0\bigr\}.
$$
\noindent  It is clear, that $\tau$ is coarser and the norm-topology is finer than $\taum$. Wiweger in \cite{wiweger:1961} presents a very general construction for the mixed-topology (without the assumptions in Hypothesis \ref{hyp:1} and even on not necessarily locally convex spaces). As a consequence of Examples D) and E) and in Theorem 3.1.1 of \cite{wiweger:1961} one obtains that $\taum$ is the finest locally convex topology on $X$ that coincides with $\tau$ on norm-bounded sets, for instance, if
\begin{enumerate}[1)]
\item $X=\Cb(\Omega)$ endowed with the sup-norm, $\Omega$ a completely regular space, and $\tau=\tauc$ the compact-open topology on $\Cb(\Omega)$ (cf.~Example \ref{ex:cb});
\item or, if $X=E'$, $E$ a Banach space and $\tau=\sigma(E',E)$ the weak$^*$-topology (cf.~Example \ref{e:KS}).
\end{enumerate}
By a routine argument one proves the following lemma (or see \cite[Theorem 2.3.1]{wiweger:1961}):
\begin{lemma}
\label{lem:mixedconv}
A sequence $x_n\in X$ is convergent in the topology $\taum$, if and only if it is norm-bounded and $\tau$ convergent.
\end{lemma}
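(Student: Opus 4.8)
The plan is to prove the two implications of the equivalence separately; the forward (harder) direction itself splits into a trivial part ($\tau$-convergence) and the real point (norm-boundedness), and the latter is where all the work sits.

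For the \emph{if} part, assume $(x_n)$ is norm-bounded, $\|x_n\|\le M$, and $x_n\to x$ in $\tau$. I would fix an arbitrary generating seminorm $\Tilde p_{(p_n,a_n)}\in\Tilde\semis$ and estimate $\Tilde p_{(p_n,a_n)}(x_k-x)=\sup_n a_n p_n(x_k-x)$ by splitting the supremum at a suitable index $N$: for $n>N$ the weights $a_n$ are small (since $(a_n)\in\co$) and $p_n(x_k-x)\le\|x_k-x\|\le M+\|x\|$ is uniformly bounded because $p_n\le\|\cdot\|$; for the finitely many $n\le N$ one invokes the $\tau$-convergence $p_n(x_k-x)\to0$. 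This is a routine $\varepsilon/2$-argument and poses no difficulty; it crucially uses that we are dealing with a \emph{sequence}, so only finitely many seminorms $p_1,\dots,p_N$ must be controlled at once.

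For the \emph{only if} part, first observe that $\tau$ is coarser than $\taum$, so $\taum$-convergence of $(x_n)$ immediately gives $\tau$-convergence to the same limit $x$ (the limit is unique, $\taum$ being finer than the Hausdorff topology $\tau$). It remains to show a $\taum$-convergent sequence is norm-bounded. Put $y_n:=x_n-x$, so $y_n\to0$ in $\taum$, and argue by contradiction assuming $\sup_n\|y_n\|=\infty$. I would then choose indices $n_1<n_2<\cdots$ with $\|y_{n_k}\|>2k^2$, and, using the defining identity $\|\cdot\|=\sup_{p\in\semis}p$, pick for each $k$ a seminorm $p_k\in\semis$ with $p_k(y_{n_k})>\tfrac12\|y_{n_k}\|>k^2$. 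The sequence $(p_k)_{k\in\NN}\subseteq\semis$ together with the positive null-sequence $(1/k)_{k\in\NN}\in\co$ then yields a mixed seminorm $\Tilde p_{(p_k,1/k)}\in\Tilde\semis$ for which $\Tilde p_{(p_k,1/k)}(y_{n_j})\ge\tfrac1j p_j(y_{n_j})>j\to\infty$, contradicting $\Tilde p_{(p_k,1/k)}(y_n)\to0$.

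The main obstacle is precisely this last diagonal construction: from an unbounded sequence one must manufacture a single mixed seminorm that blows up along a subsequence, and it is exactly the freedom to let the weight-sequences $(a_n)$ range over \emph{all} positive null-sequences that makes this possible. Everything else is bookkeeping — indeed, this lemma is the concrete manifestation of the general principle (see \cite{wiweger:1961}) that $\taum$ is the finest locally convex topology agreeing with $\tau$ on norm-bounded sets.
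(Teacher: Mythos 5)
Your proof is correct. The paper itself offers no argument for this lemma --- it is dismissed as ``routine'' with a pointer to Theorem 2.3.1 of Wiweger --- and your two steps (the $\varepsilon$-splitting of $\sup_n a_np_n(x_k-x)$ at a finite index for sufficiency, and the diagonal seminorm $\Tilde p_{(p_k,1/k)}$ built from a hypothetical unbounded subsequence for necessity) are exactly the standard verification being left to the reader, so this is essentially the intended argument rather than a different route.
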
%

\noindent The following result translates the notion of
bi-continuous semigroups to the language of mixed topologies.
\begin{proposition}\label{prop:mixsg}
The class of $\tau$-bi-continuous semigroups and the class of $\taum$\--strong\-ly
continuous and locally sequentially $\taum$-equi\-con\-ti\-nuous
semigroups coincide.
\begin{proof}
Let $(T(t))_{t\geq 0}$ be a $\tau$-bi-continuous semigroup. Since $[0,1]\ni
h\to T(h)x$ is norm-bounded and $\tau$-continuous for all $x$, we
obtain by Lemma \ref{lem:mixedconv} that these orbits are also
$\taum$-continuous. The  sequential $\taum$-equicontinuity of the
family
$$
\left\{T(t):t\in[0,t_0]\right\}
$$
is simply a reformulation of Definition \ref{def:bicont}.(iii) in view of
Lemma \ref{lem:mixedconv}.

\medskip For the converse, let
$(T(t))_{t\geq 0}$ be a $\taum$-strongly continuous and locally
se\-quen\-ti\-ally-equi\-con\-ti\-nu\-o\-us semigroup. Then the orbits
$[0,1]\ni h\to T(h)x$ are norm-bounded since they are
$\taum$-continuous. The $\tau$-strong continuity is immediate, while
the local bi-equicontinuity follows  again  by Lemma
\ref{lem:mixedconv}.
\end{proof}
\end{proposition}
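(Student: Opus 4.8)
The plan is to prove the two inclusions separately, in both cases using Lemma \ref{lem:mixedconv} as the bridge between the mixed topology and the pair $(\tau, \|\cdot\|)$. The key observation throughout is that $\taum$ agrees with $\tau$ on norm-bounded sets (and is finer than $\tau$), so a sequence is $\taum$-convergent precisely when it is norm-bounded and $\tau$-convergent.

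First I would take a $\tau$-bi-continuous semigroup $(T(t))_{t\geq 0}$ and check the two properties of the right-hand class. For $\taum$-strong continuity, fix $x\in X$; by Definition \ref{def:bicont}(ii) the orbit $h\mapsto T(h)x$ is norm-bounded on $[0,1]$, and by (i) it is $\tau$-continuous, so any sequence $h_n\to h$ in $[0,1]$ gives a norm-bounded, $\tau$-convergent sequence $T(h_n)x\to T(h)x$, which by Lemma \ref{lem:mixedconv} is $\taum$-convergent; since $X$ with a locally convex topology having a countable-type description on bounded sets is sequentially determined here, this yields $\taum$-continuity of the orbit on $[0,1]$, and exponential boundedness extends it to $\RR_+$. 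For local sequential $\taum$-equicontinuity of $\{T(t):t\in[0,t_0]\}$: given a sequence $x_n\to 0$ in $\taum$, Lemma \ref{lem:mixedconv} says $(x_n)$ is norm-bounded and $\tau$-null, so Definition \ref{def:bicont}(iii) gives $T(t)x_n\to 0$ in $\tau$ uniformly for $t\in[0,t_0]$; this sequence is also norm-bounded uniformly in $t$ (by exponential boundedness), hence $\taum$-null uniformly in $t$ by Lemma \ref{lem:mixedconv} again, which is exactly sequential $\taum$-equicontinuity on $[0,t_0]$.

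For the converse, let $(T(t))_{t\geq 0}$ be $\taum$-strongly continuous and locally sequentially $\taum$-equicontinuous, and verify Definition \ref{def:bicont}. Norm-boundedness on $[0,1]$ (property (ii)) follows because a $\taum$-continuous image of the compact set $[0,1]$, for each fixed $x$, is $\taum$-bounded, hence norm-bounded (as $\taum$ is finer than $\tau$ but has the same bounded sets as the norm in the relevant range — more directly, $\taum$-bounded sets are norm-bounded since the seminorms $\Tilde p_{(p_n,a_n)}$ control $\|\cdot\|$ via suitable choices), and then the uniform boundedness principle gives a uniform bound, with exponential boundedness on $\RR_+$ following by the semigroup law. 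Property (i), $\tau$-strong continuity, is immediate since $\taum$ is finer than $\tau$. For (iii), take a norm-bounded $\tau$-null sequence $x_n$; by Lemma \ref{lem:mixedconv} it is $\taum$-null, so local sequential $\taum$-equicontinuity gives $T(t)x_n\to 0$ in $\taum$ uniformly on compact intervals, and since $\taum$ is finer than $\tau$ this convergence holds in $\tau$ as well, uniformly on compact intervals.

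The main obstacle — or at least the point requiring genuine care rather than routine bookkeeping — is the passage from \emph{sequential} statements (which is all Lemma \ref{lem:mixedconv} and Definition \ref{def:bicont}(iii) provide) to the \emph{continuity} of orbits as functions on an interval, and symmetrically the extraction of a uniform norm bound on $[0,1]$ from pointwise $\taum$-continuity. One must argue that $\taum$-continuity of the orbit on $[0,1]$ is detected by sequences (which is fine, since here it suffices to test $\taum$-convergence sequentially for norm-bounded nets, and orbits are norm-bounded once that bound is established — there is a mild circularity to unwind by first getting the norm bound from $\tau$-continuity plus the uniform boundedness principle applied to $(X,\tau)'$, using Hypothesis \ref{hyp:1}(iii)). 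Once this interplay is set up correctly, everything else is a direct translation via Lemma \ref{lem:mixedconv}.
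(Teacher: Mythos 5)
Your argument is correct and follows essentially the same route as the paper: Lemma \ref{lem:mixedconv} translates ``norm-bounded $+$ $\tau$-convergent'' into $\taum$-convergent in both directions, with the converse using that $\taum$ is finer than $\tau$ and that $\taum$-continuous orbits on $[0,1]$ are norm-bounded. Your only wobble is attributing the passage from sequential to genuine $\taum$-continuity of orbits to a property of $X$; it is simply that the domain $[0,1]$ is metric (first countable), so sequential continuity suffices, and the remaining details you add (uniform boundedness principle, $\taum$-bounded sets being norm-bounded) are fine.
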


\section{Adjoint of bi-continuous semigroups}\label{s:adj}
Given a bi-continuous semigroup $(T(t))_{t\geq 0}$ we would like to define its adjoint in such a way that it will be again a bi-continuous semigroup hence fitting in this theory. First, we have to specify the space with properties listed in Hypothesis \ref{hyp:1}. The next proposition gives a candidate.
\begin{proposition}\label{p:invari}
Given $X$ a Banach space with the additional topology $\tau$  satisfying Hypothesis \ref{hyp:1}, denote by $X\badj$ be the set of all norm-bounded linear
functionals which are $\tau$-sequentially continuous on norm-bounded sets of $X$. Then $X\badj$ is a closed linear subspace of the norm-dual $X'$, hence it is a Banach space.
\end{proposition}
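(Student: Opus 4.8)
The plan is to verify the two assertions in turn: first that $X\badj$ is a linear subspace of $X'$, and then that it is norm-closed; closedness together with completeness of $X'$ then gives the Banach space conclusion. That $X\badj$ is contained in $X'$ is built into the definition (we only take norm-bounded functionals), and linearity is clear since a finite linear combination of functionals that are $\tau$-sequentially continuous on norm-bounded sets is again of this type: if $x_n$ is a norm-bounded $\tau$-null sequence and $\varphi,\psi\in X\badj$, $\alpha,\beta$ scalars, then $(\alpha\varphi+\beta\psi)(x_n)=\alpha\varphi(x_n)+\beta\psi(x_n)\to 0$.

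The substantive point is norm-closedness. So I would take a sequence $\varphi_k\in X\badj$ with $\varphi_k\to\varphi$ in $X'$ (operator/dual norm) and show $\varphi\in X\badj$. Clearly $\varphi$ is norm-bounded, being a norm-limit of elements of $X'$. It remains to check $\tau$-sequential continuity on norm-bounded sets: fix a norm-bounded sequence $x_n$ in $X$ with $x_n\tlm x$ as $n\to\infty$; by replacing $x_n$ with $x_n-x$ (still norm-bounded, now $\tau$-null) it suffices to treat the case $x_n\tlm 0$, and we must show $\varphi(x_n)\to 0$. Let $M\defeq\sup_n\|x_n\|<\infty$. Given $\varepsilon>0$, pick $k$ with $\|\varphi-\varphi_k\|\leq \varepsilon/(2M)$; then for every $n$,
\begin{equation*}
|\varphi(x_n)|\leq |(\varphi-\varphi_k)(x_n)|+|\varphi_k(x_n)|\leq \|\varphi-\varphi_k\|\,\|x_n\|+|\varphi_k(x_n)|\leq \tfrac{\varepsilon}{2}+|\varphi_k(x_n)|.
\end{equation*}
Since $\varphi_k\in X\badj$ and $(x_n)$ is norm-bounded and $\tau$-null, $\varphi_k(x_n)\to 0$, so there is $N$ with $|\varphi_k(x_n)|<\varepsilon/2$ for $n\geq N$, giving $|\varphi(x_n)|<\varepsilon$ for $n\geq N$. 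Hence $\varphi(x_n)\to 0$, so $\varphi\in X\badj$, proving $X\badj$ is closed.

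This is the standard ``uniform limit of (sequentially) continuous functions is (sequentially) continuous'' argument, and the only thing that makes it go through cleanly is precisely the norm-boundedness restriction built into the definition of $X\badj$: it lets us control the error term $|(\varphi-\varphi_k)(x_n)|$ uniformly in $n$ via a single bound $M$ on $\|x_n\|$. I do not expect any genuine obstacle here — the proof is a short three-epsilon estimate — but it is worth being explicit that the reduction to $\tau$-null sequences and the uniform bound $M$ are what the argument rests on. Finally, since $X'$ with the dual norm is complete and $X\badj$ is a norm-closed subspace, $X\badj$ is itself a Banach space, as claimed.
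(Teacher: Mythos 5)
Your proof is correct and follows essentially the same route as the paper: linearity is immediate, and closedness is exactly the same three-epsilon estimate $|\varphi(x_n)|\leq\|\varphi-\varphi_k\|\,\|x_n\|+|\varphi_k(x_n)|$ using the uniform norm bound on $(x_n)$, first choosing $k$ and then letting $n\to\infty$. The only (harmless) difference is that you spell out the reduction from a general norm-bounded $\tau$-convergent sequence to a $\tau$-null one, which the paper takes for granted.
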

\begin{proof}
That $X\badj$ is a linear subspace is trivial.
Take $\varphi_n\in
X\badj$ with $\|\varphi_n-\varphi\|\lm 0$, where $\varphi\in X'$. We
have to show that $\varphi\in X\badj$. To this end, consider a
norm-bounded $\tau$-null sequence $(x_n)$.  Then $\varphi(x_n)\lm 0$
follows from
\begin{equation*}
\begin{split}
|\varphi(x_n)|&\leq|\varphi(x_n)-\varphi_k(x_n)|+|\varphi_k(x_n)|\leq\\
&\leq K\|\varphi-\varphi_k\|+|\varphi_k(x-x_n)|\leq
\tfrac{\varepsilon}{2}+\tfrac{\varepsilon}{2}=\varepsilon,
\end{split}
\end{equation*}
first by taking $k\in\NN$ sufficiently large and then for fixed $k$
using the continuity assumptions on $\varphi_k$.
\end{proof}
 So the norm  inherited from $X'$ makes $X\badj$ a Banach space.  We equip $X\badj$  additionally  with the weak topology $\tau\badj\defeq\sigma(X\badj,X)$. It is our aim to consider bi-continuous semigroups with respect to this topology on $X\badj$. For this purpose we have to verify the validity of Hypothesis \ref{hyp:1}. Clearly,
$\tau\badj$ is Hausdorff since $X$ separates the points of $X\badj$, and also $X$  is norming.
Trivially, $\tau\badj$ is coarser than the norm-topology on $X\badj$. It remains to check only the $\tau\badj$-sequential completeness on
closed, norm-bounded sets, but this generally may fail to hold, so we incorporate this requirement into our hypotheses.
\begin{hypothesis}\label{hyp:2} Suppose that $X\badj\cap \overline{B(0,1)}$ is sequentially complete for $\sigma(X\badj,X)$.
\end{hypothesis}
The next example shows that this assumption is indeed restrictive, i.e.~it is not a consequence of the general framework of Hypothesis \ref{hyp:1}.
\begin{example}
Let $E$ be a non-reflexive Banach space $X=E'$ its norm dual and $\tau=\sigma(E',E)$ the weak$^*$ topology. Suppose also that $E'$ is separable, whence $\sigma(E'',E')$ is metrisable on bounded sets. Then $X\badj=(X,\tau)'=E$ and $\tau\badj=\sigma(E,E')$, the weak topology.  Take $y\in E''\setminus E$  arbitrary with $\|y\|_{E'}\leq 1$,  and take $y_n\in E$ with $\|y_n\|_{E}\leq 1$ converging to $y$ in the weak$^*$ topology $\sigma(E'',E')$ (such a sequence exists by Goldstine's Theorem). This shows that $\tau\badj=\sigma(E,E')$ (being the restriction of $\sigma(E'',E')$ to $E$) is not complete on $\overline{B_{X\badj}(0,1)}$. For a concrete example take $E=\co$, $E'=\ell^1$, $E''=\ell^\infty$, $y=\mathbf{1}$ the constant $1$ sequence and $y_n=$the sequence with first $n$ members $1$ the others $0$. So $X=\ell^1$ with $\tau=\sigma(\ell^1,\co)$, and $y_n\to y$ in $\ell^\infty$ for $\sigma(\ell^\infty,\ell^1)$.
\end{example}

The framework of Hypothesis \ref{hyp:1} is now established. are ready to define adjoint bi-continuous semigroups. The proof of the next proposition is straightforward.
\begin{proposition}
Let $B\in\LL(X)$ be a norm-bounded linear operator which is also
$\tau$-sequentially continuous on norm-bounded sets. Then the adjoint $B'\in \LL(X')$ leaves $X\badj$ invariant.
\end{proposition}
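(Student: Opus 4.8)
The plan is to show that $B' $ maps $X\badj$ into itself, i.e.\ that for $\varphi\in X\badj$ the functional $B'\varphi = \varphi\circ B$ is again norm-bounded and $\tau$-sequentially continuous on norm-bounded sets. Norm-boundedness is immediate, since $\|\varphi\circ B\|\leq \|\varphi\|\,\|B\|$, so $\varphi\circ B\in X'$. It remains to verify the sequential continuity property.

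The key step is the following observation: let $(x_n)\subseteq X$ be a norm-bounded sequence with $x_n\tlm 0$. Since $B$ is norm-bounded, the sequence $(Bx_n)$ is again norm-bounded, with $\|Bx_n\|\leq \|B\|\sup_n\|x_n\|<\infty$. Since $B$ is, by hypothesis, $\tau$-sequentially continuous on norm-bounded sets, and $(x_n)$ lies in such a set and is $\tau$-null, we get $Bx_n\tlm 0$. Thus $(Bx_n)$ is itself a norm-bounded $\tau$-null sequence. Applying the defining property of $\varphi\in X\badj$ to this sequence yields $(\varphi\circ B)(x_n)=\varphi(Bx_n)\lm 0$. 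Hence $\varphi\circ B$ is $\tau$-sequentially continuous on norm-bounded sets, so $B'\varphi=\varphi\circ B\in X\badj$, which is exactly the claim.

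There is essentially no obstacle here; the proof is a one-line chase through the definitions, and the only point worth stressing is that the hypotheses on $B$ (norm-boundedness \emph{together with} $\tau$-sequential continuity on norm-bounded sets) are precisely what is needed to keep a norm-bounded $\tau$-null sequence inside the class of norm-bounded $\tau$-null sequences after applying $B$. This is why one cannot work merely with $\tau$-continuous operators and why the restriction to norm-bounded sets in both places matters. In particular, this proposition applies to each $T(t)$ of a $\tau$-bi-continuous semigroup, so that the operators $T(t)'$ restrict to operators on $X\badj$; this is what makes the subsequent definition of the adjoint semigroup on $X\badj$ meaningful.
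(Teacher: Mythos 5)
Your proof is correct and is exactly the routine argument the paper has in mind (the paper omits it entirely, remarking only that the proof is straightforward): $B$ sends norm-bounded $\tau$-null sequences to norm-bounded $\tau$-null sequences, so composing with $\varphi\in X\badj$ gives again an element of $X\badj$. Nothing to add.
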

For a linear operator $B\in \LL(X)$ that is also $\tau$-sequentially continuous on norm bounded sets, denote by $B\badj$ the restriction of $B'\in \LL(X')$ to $X\badj$.

\medskip  Take now a bi-continuous semigroup $(T(t))_{t\geq 0}$ on $(X,\tau)$. Then the operators
$T(t)\badj$ obviously form a semigroup $(T\badj(t))_{t\geq 0}$, which is $\tau\badj$-strongly
continuous by definition. The exponential boundedness of $(T\badj(t))_{t\geq 0}$ is
trivial. To establish the local bi-equicontinuity, we assume the
following on the underlying space.
\begin{hypothesis}\label{hyp:3}
Every $(\varphi_n)\subset X\badj$ norm-bounded $\tau\badj$-null sequence is $\tau$ equi\-con\-ti\-nuous on norm bounded sets.
\end{hypothesis}

Under this and the previous hypotheses we have the following.
\begin{proposition}
Let $(T(t))_{t\geq 0}$ be a $\tau$-bi-continuous semigroup on $(X,\tau)$, and suppose that Hypotheses \ref{hyp:2} and \ref{hyp:3} are satisfied. Then the semigroup $(T\badj(t))_{t\geq 0}$ is a $\tau\badj$-bi-continuous semigroup on $X\badj$ (recall: by definition $\tau\badj=\sigma(X\badj,X)$).
\end{proposition}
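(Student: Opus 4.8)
The plan is to observe that, among the three conditions in Definition~\ref{def:bicont}, everything except the local bi-equicontinuity of $(T\badj(t))_{t\geq0}$ is either already in place or immediate, and then to obtain that remaining property by an Arzel\`a--Ascoli-type argument on $\tau$-compact orbits. First I would record that, by the discussion preceding Hypothesis~\ref{hyp:2} together with that hypothesis, $(X\badj,\tau\badj)$ satisfies Hypothesis~\ref{hyp:1}, so that ``$\tau\badj$-bi-continuous semigroup'' is a meaningful notion here. The semigroup law for $(T\badj(t))_{t\geq0}$ is clear, and $\|T\badj(t)\|\leq\|T(t)'\|=\|T(t)\|$ gives the (exponential) norm-boundedness. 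For $\tau\badj$-strong continuity I would argue pointwise: if $\varphi\in X\badj$, $x\in X$ and $t_k\lm t_0$ in $\RR_+$, then $(T(t_k)x)_k$ is norm-bounded (by exponential boundedness of $(T(t))_{t\geq0}$) and $\tau$-convergent to $T(t_0)x$, whence $(T\badj(t_k)\varphi)(x)=\varphi(T(t_k)x)\lm\varphi(T(t_0)x)=(T\badj(t_0)\varphi)(x)$ by $\tau$-sequential continuity of $\varphi$ on norm-bounded sets; as $x\in X$ is arbitrary and $\RR_+$ is metrisable, this says $t\mapsto T\badj(t)\varphi$ is $\sigma(X\badj,X)$-continuous.

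The substantive part is local bi-equicontinuity. Given a norm-bounded $\tau\badj$-null sequence $(\varphi_n)\subset X\badj$ and $t_0>0$, I would need to show, for every $x\in X$,
$$
\sup_{t\in[0,t_0]}\bigl|(T\badj(t)\varphi_n)(x)\bigr|=\sup_{t\in[0,t_0]}\bigl|\varphi_n(T(t)x)\bigr|\lm 0,
$$
which is precisely $T\badj(t)\varphi_n\lm0$ in $\tau\badj$ uniformly on $[0,t_0]$. Fixing $x$, I would set $K\defeq\{T(t)x:t\in[0,t_0]\}$ and note that $K$ is $\tau$-compact, being the image of the compact interval $[0,t_0]$ under the $\tau$-continuous orbit map $t\mapsto T(t)x$, and norm-bounded, again by exponential boundedness. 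On $K$ I would then combine two ingredients: first, $\varphi_n\to0$ pointwise on $K$, since $\varphi_n\to0$ in $\sigma(X\badj,X)$; second, by Hypothesis~\ref{hyp:3}, the sequence $(\varphi_n)$ is $\tau$-equicontinuous on the norm-bounded set $K$. The routine Arzel\`a--Ascoli argument --- cover $K$ by finitely many $\tau$-open sets on each of which all the $\varphi_n$ oscillate by less than $\varepsilon/2$, then use pointwise convergence at the finitely many centres --- then yields $\sup_{y\in K}|\varphi_n(y)|\lm0$, i.e.\ the desired uniform convergence, finishing the verification of Definition~\ref{def:bicont}.(iii).

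I expect the only genuine obstacle to be this final passage from pointwise to uniform convergence on $K$. Since $\tau$ need not be metrisable, $\tau$-compactness of $K$ alone does not bridge the two modes of convergence, and sequential $\tau$-continuity of each individual $\varphi_n$ does not suffice either; what makes the covering argument work is exactly genuine $\tau$-equicontinuity of the whole sequence $(\varphi_n)$, which is why Hypothesis~\ref{hyp:3} is imposed. Hypothesis~\ref{hyp:2} plays no role in the equicontinuity estimate as such: it is needed only upstream, to ensure that the closed unit ball of $X\badj$ is sequentially complete for $\sigma(X\badj,X)$, hence that $(X\badj,\tau\badj)$ is an admissible ambient space in the first place.
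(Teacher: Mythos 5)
Your proposal is correct and follows essentially the same route as the paper: the easy parts ($\tau\badj$-strong continuity via sequential continuity of functionals on norm-bounded orbits, norm bounds) are dispatched as the paper does, and local bi-equicontinuity is obtained from the $\tau$-compactness of the orbit $\{T(t)x:t\in[0,t_0]\}$ together with the $\tau$-equicontinuity of $(\varphi_n)$ supplied by Hypothesis~\ref{hyp:3}. Your explicit covering argument is precisely the step the paper delegates to (the adaptation of) Theorem III.4.5 of Schaefer, namely that for an equicontinuous sequence pointwise convergence coincides with uniform convergence on compact sets.
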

\begin{proof}
By what is said in the paragraph preceding Hypothesis \ref{hyp:3} it remains to show the local $\tau\badj$-bi-equicontinuity of $(T\badj(t))_{t\geq 0}$. To this end take
a norm-bounded $\tau\badj$-null sequence and $x\in X$. For $t_0>0$
one has the $\tau$-compactness of $\{T(t)x:\:t\in[0,t_0]\}$, thus by
the equicontinuity of $\varphi_n$ we have
\begin{equation*}
[T\badj(t)\varphi_n](x)=\varphi_n(T(t)x)\lm 0
\end{equation*}
uniformly on $[0,t_0]$ (use here that for the equicontinuous family $\varphi_n$ pointwise convergence is same as uniform convergence on compact sets; actually what is needed here is the adaptation of Theorem III.4.5.~Schaefer \cite{schaefer:1999} to our situation). This means precisely the $\tau\badj$-bi-equicontinuity
of $(T\badj(t))_{t\geq 0}$, therefore if $(T(t))_{t\geq 0}$ is a bi-continuous semigroup.
\end{proof}

\section{Semigroups on the space continuous functions and of measures}\label{s:meas}
In this section, we would like to carry out the adjoint construction from the previous section in the particular case when $X=\Cb(\Omega)$, the space of bounded and continuous functions. For this purpose we first need to study the mixed-topology on $\Cb(\Omega)$, and recall some results from Sentilles \cite{sentilles:1970} and, in slightly modified form, from Farkas \cite{Fa04b}.

\medskip  Let $\Omega$ be a Polish space or a $\sigma$-compact locally compact Hausdorff space. Consider the mixed topology $\taum$ which is finest locally convex topology that coincides with $\tauc$ on sup-norm bounded sets of $\Cb(\Omega)$ (see the end of Section \ref{s:intro}). The dual of $(\Cb(\Omega),\taum)$ is the space $\MM(\Omega)$ of bounded Baire measures in $\Omega$. We briefly indicate a way to see this. By assumption $\Omega$ completely regular. The dual of $\Cb(\Omega)$ (as a Banach space) is isomorphic to the space $\MM(\beta \Omega)$ of all bounded, Baire measures on the Stone-\v Cech compactification  $\beta\Omega$ of $\Omega$, and the isomorphism is given by $\varphi(f)=\int_{\beta\Omega} f\dd\mu$. (One can represent a continuous linear function even by regular Borel measures, in this respect we refer to Knowles \cite{knowles:1967} and Ma\v r\'{\i}k \cite{marik:1957}.)

If $\Omega$ is Polish then it is $G_\delta$, and if $\Omega$ is locally compact it is open in $\beta\Omega$, see, e.g., Walker \cite[Ch.~1]{walker:1974}. Furthermore as $\Omega$ is Lindel\"of and $G_\delta$ in $\beta\Omega$, it is also a Baire set in there (a space with this property was called absolute Baire by Negrepontis \cite{negrepontis:1967}, see also Frol\'{\i}k \cite{frolik:1970}). Therefore it is possible to identify $\MM(\Omega)$ with a subspace of $\MM(\beta\Omega)$ in the following way:
\begin{equation*}
\begin{split}
&\iota:\MM(\Omega)\to \MM(\beta\Omega),\\
&[\iota(\nu)](B)\defeq \nu(\Omega\cap B)\quad\mbox{ for all
$\nu\in\MM(\Omega)$ and $B\subseteq\beta\Omega$ Baire set.}
\end{split}
\end{equation*}
Then $\iota$ is an injection with
$$
\rg\iota=\left\{\mu:\mu\in\MM(\beta\Omega),\:|\mu|(\beta\Omega\setminus\Omega)=0\right\}.
$$
One can see that a measure $\mu\in\MM(\beta\Omega)$ gives rise to a linear functional $\varphi\in \Cb(\Omega)'$ which is not only norm-continuous, but also $\tauc$-continuous on norm-bounded sets, if and only if it  belongs to $\rg\iota$, i.e.~using the above identification, it is a Baire measure on $\Omega$ (see, e.g., \cite{Fa04b} or Sentilles \cite{sentilles:1972}).

The mixed topology, also called the \emph{strict topology} and denoted by $\beta_0:=\taum$ in this setting, has the next remarkable properties.
\begin{theorem}[Sentilles \cite{sentilles:1972}]\label{thm:sent1}
Let $\Omega$  either be a $\sigma$-compact, locally compact space, or a Polish space. Then the following assertions are true:
\begin{enumerate}[a)]
\item  $\beta_0=\mu(\Cb(\Omega),\MM(\Omega))$, the Mackey topology, where $\MM(\Omega)$ denotes space of all bounded Baire-measures on $\Omega$.
\item  A linear operator $T:\Cb(\Omega)\to \Cb(\Omega)$ is $\beta_0$-continuous, if and only if it is $\beta_0$-sequentially continuous. The same holds for linear functionals.
\item  Every $(\varphi_n)\subset (\Cb(\Omega),\beta_0)'$ $\sigma(\MM(\Omega),\Cb(\Omega))$-null sequence is $\beta_0$-equi\-con\-ti\-nuous.
\end{enumerate}
\end{theorem}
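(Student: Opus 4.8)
The plan is to follow Sentilles \cite{sentilles:1972}, indicating the structure of the argument and pointing out where the topological hypotheses on $\Omega$ enter; throughout one relies on the identification $(\Cb(\Omega),\beta_0)'=\MM(\Omega)$ established above, on the description of $\beta_0=\taum$ as a mixed topology, and on Lemma~\ref{lem:mixedconv}.

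We first isolate the assertion behind both a) and c): every $\sigma(\MM(\Omega),\Cb(\Omega))$-compact set $H\subseteq\MM(\Omega)$ is $\beta_0$-equicontinuous. To prove it, one first notes that $H$ is norm-bounded by the uniform boundedness principle (for each $f$ the set $\{\nu(f):\nu\in H\}$ is compact in $\RR$), and then --- this being the decisive analytic input --- that $H$ is \emph{uniformly tight}: for every $\varepsilon>0$ there is a compact $K\subseteq\Omega$ with $|\nu|(\Omega\setminus K)\leq\varepsilon$ for all $\nu\in H$. This is a Prokhorov-type theorem, valid precisely because $\Omega$ is Polish or $\sigma$-compact locally compact. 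Granting it, choose an increasing sequence of compacts $K_n$, with $K_0=\emptyset$, such that $\sup_{\nu\in H}|\nu|(\Omega\setminus K_n)\leq 4^{-n}$; since then $\Omega\setminus\bigcup_nK_n$ is $|\nu|$-null for every $\nu\in H$, one estimates, for $f\in\Cb(\Omega)$,
\[
|\nu(f)|\;\leq\;\sum_{n\geq1}\Bigl(\sup_{x\in K_n}|f(x)|\Bigr)\,|\nu|(K_n\setminus K_{n-1})\;\leq\;\sum_{n\geq1}\Bigl(\sup_{x\in K_n}|f(x)|\Bigr)\,|\nu|(\Omega\setminus K_{n-1}),
\]
and the right-hand side is dominated, uniformly over $\nu\in H$, by a constant multiple (depending only on $\sup_{\nu\in H}\|\nu\|$) of the mixed-topology seminorm $f\mapsto\sup_n 2^{-n}\sup_{x\in K_n}|f(x)|$; hence $H$ is $\beta_0$-equicontinuous. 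Assertion a) now follows: since $\beta_0$ is compatible with the pair $\la\Cb(\Omega),\MM(\Omega)\ra$, the Mackey--Arens theorem gives $\beta_0\subseteq\mu(\Cb(\Omega),\MM(\Omega))$, while the lemma shows that the polar of every absolutely convex $\sigma(\MM(\Omega),\Cb(\Omega))$-compact set is a $\beta_0$-neighbourhood of $0$, and these polars form a neighbourhood base for $\mu(\Cb(\Omega),\MM(\Omega))$. Assertion c) is likewise immediate: a $\sigma(\MM(\Omega),\Cb(\Omega))$-null sequence $(\varphi_n)$ together with its limit $0$ is a $\sigma(\MM(\Omega),\Cb(\Omega))$-compact subset of $\MM(\Omega)$, hence $\beta_0$-equicontinuous by the lemma.

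For b), one implication is trivial. Conversely, a $\beta_0$-sequentially continuous linear functional $\varphi$ is norm-continuous (the norm topology being finer than $\beta_0$) and, by Lemma~\ref{lem:mixedconv}, carries norm-bounded $\tauc$-null sequences to null sequences; this $\tauc$-sequential continuity on norm-bounded sets upgrades to $\tauc$-continuity there (for $\sigma$-compact locally compact $\Omega$ because $(\Cb(\Omega),\tauc)$ is then metrizable, and in general by the corresponding argument of Sentilles), so $\varphi\in\MM(\Omega)=(\Cb(\Omega),\beta_0)'$ by the characterisation of $\MM(\Omega)$ recalled above, i.e.\ $\varphi$ is $\beta_0$-continuous. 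If an operator $T\colon\Cb(\Omega)\to\Cb(\Omega)$ is $\beta_0$-sequentially continuous, then for each $\varphi\in\MM(\Omega)=(\Cb(\Omega),\beta_0)'$ the functional $\varphi\circ T$ is $\beta_0$-sequentially continuous, hence $\beta_0$-continuous by the case just treated; thus $T$ is continuous from $(\Cb(\Omega),\beta_0)$ into $(\Cb(\Omega),\sigma(\Cb(\Omega),\MM(\Omega)))$, so its adjoint maps $\MM(\Omega)$ into itself and is $\sigma(\MM(\Omega),\Cb(\Omega))$-continuous, whence it sends $\sigma(\MM(\Omega),\Cb(\Omega))$-compact sets to $\sigma(\MM(\Omega),\Cb(\Omega))$-compact sets; by a) this means exactly that $T$ is $\mu(\Cb(\Omega),\MM(\Omega))=\beta_0$-continuous.

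The step I expect to be the main obstacle is this tightness input, namely the Prokhorov-type assertion that $\sigma(\MM(\Omega),\Cb(\Omega))$-compact sets of Baire measures are uniformly tight. The remaining ingredients are soft: Mackey--Arens, the mixed-topology estimate, the metrizability of $(\Cb(\Omega),\tauc)$ in the $\sigma$-compact case, and the adjoint argument for operators. It is precisely in the tightness lemma that the assumptions on $\Omega$ are indispensable, and for a general completely regular $\Omega$ the three assertions may fail.
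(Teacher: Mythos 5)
The paper does not actually prove this statement: it is quoted verbatim from Sentilles \cite{sentilles:1972} (the only ``proof'' is the citation), so there is nothing internal to compare your argument against step by step. Judged as a reconstruction of Sentilles' argument, your outline is structurally faithful and the explicit parts are correct: the reduction of a) and c) to ``$\sigma(\MM(\Omega),\Cb(\Omega))$-compact sets are $\beta_0$-equicontinuous'', the passage from uniform tightness to equicontinuity via the mixed seminorm $f\mapsto\sup_n 2^{-n}\sup_{K_n}|f|$ (your estimate does close, with the constant absorbing $\sup_{\nu\in H}\|\nu\|$), the Mackey--Arens argument for a), the ``sequence together with its limit is weakly compact'' argument for c), and the reduction of the operator case in b) to the functional case via $\varphi\circ T$ and weak-to-Mackey continuity.

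The two places where the theorem's actual content sits are, however, only named, not proved. First, the Prokhorov-type assertion that $\sigma(\MM(\Omega),\Cb(\Omega))$-compact sets of \emph{signed} Baire measures are uniformly tight: you flag this candidly, and it is indeed the heart of Sentilles' ``strong Mackey'' theorem (resting on Conway's work in the locally compact case and on Prokhorov-type results in the Polish case), so deferring it is defensible but means the proof is a scheme rather than a proof. Second, and less clearly acknowledged, in part b) for Polish but non-locally-compact $\Omega$ the phrase ``by the corresponding argument of Sentilles'' hides the step that carries the load: $(\Cb(\Omega),\tauc)$ is not metrizable (not even on norm-bounded sets) in that generality, and the actual route is different in kind --- one shows that a $\beta_0$-sequentially continuous functional is $\sigma$-additive (if $f_n\downarrow 0$ pointwise and boundedly, Dini's theorem gives $\tauc$-, hence $\beta_0$-convergence, so $\varphi(f_n)\to0$), and then invokes Ulam's theorem that every bounded $\sigma$-additive Baire measure on a Polish space is tight, which is what places $\varphi$ in $(\Cb(\Omega),\beta_0)'=\MM(\Omega)$. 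So the hypotheses on $\Omega$ enter twice, not once: in the tightness of weakly compact \emph{sets} (your flagged lemma) and in the tightness of individual $\sigma$-additive measures needed for b). With those two inputs supplied from \cite{sentilles:1972}, your argument is sound; without them it does not stand on its own, but then neither does the paper pretend to prove the statement.
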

As a consequence of b) we have $\Cb(\Omega)\badj=\MM(\Omega)$. We now turn our attention to $\tauc$-bi-continuous semigroups.
\subsection*{Bi-continuous semigroups on $\Cb(\Omega)$}
By Proposition \ref{p:invari}, for a linear operator $B\in \LL(\Cb(\Omega))$ which is $\tauc$-continuous on norm bounded sets the (Banach space) adjoint $B'$ leaves $\MM(\Omega)$ invariant, its restriction is denoted by $B\badj$. The next lemma is proved in \cite{Fa04b}.
\begin{lemma}\label{lem:3.2}
Let $\Omega$ be a Polish space, and let $T:\RR_+\to\LL(\Cb(\Omega))$ be a $\tauc$-strongly continuous function consisting of operators that are $\tauc$-continuous on norm-bounded sets. For a norm-bo\-un\-ded, weak$^*$-compact set $\KKK\subseteq\MM(\Omega)$  and $t_0>0$ the set of measures
$$
\left\{T\badj(t)\nu:t\in[0,t_0],\:\nu\in\KKK\right\}
$$
is tight.
\end{lemma}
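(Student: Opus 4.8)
The plan is to exhibit the set $\{T\badj(t)\nu:t\in[0,t_0],\ \nu\in\KKK\}$ as the image of a compact set under a map which is continuous into $\MM(\Omega)$ equipped with the weak$^*$-topology $\sigma(\MM(\Omega),\Cb(\Omega))$, and then to read off uniform tightness from Prokhorov's theorem. Two facts are used at the outset. First, the adjoint acts by $[T\badj(t)\nu](f)=\int_\Omega T(t)f\dd\nu$ for $f\in\Cb(\Omega)$ and $\nu\in\MM(\Omega)$. Second, since $\Omega$ is Polish, Prokhorov's theorem tells us that the norm-bounded, weak$^*$-compact set $\KKK$ is already \emph{uniformly tight}: for every $\varepsilon>0$ there is a compact $K\subseteq\Omega$ with $|\nu|(\Omega\setminus K)<\varepsilon$ for all $\nu\in\KKK$.

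The key step is to prove that $\Phi\colon[0,t_0]\times\KKK\to\MM(\Omega)$, $\Phi(t,\nu)\defeq T\badj(t)\nu$, is continuous, where $[0,t_0]$ carries the usual topology, $\KKK$ the weak$^*$-topology, and $\MM(\Omega)$ the topology $\sigma(\MM(\Omega),\Cb(\Omega))$; for this it is enough that $(t,\nu)\mapsto\int_\Omega T(t)f\dd\nu$ be continuous for each fixed $f\in\Cb(\Omega)$. Given a convergent net $(t_\alpha,\nu_\alpha)\to(t,\nu)$, I would write
\[
\int_\Omega T(t_\alpha)f\dd\nu_\alpha-\int_\Omega T(t)f\dd\nu=\int_\Omega\bigl(T(t_\alpha)f-T(t)f\bigr)\dd\nu_\alpha+\Bigl(\int_\Omega T(t)f\dd\nu_\alpha-\int_\Omega T(t)f\dd\nu\Bigr),
\]
where the second summand tends to $0$ because $T(t)f\in\Cb(\Omega)$ and $\nu_\alpha\to\nu$ weak$^*$. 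For the first summand set $g_\alpha\defeq T(t_\alpha)f-T(t)f$; then $g_\alpha\to0$ in $\tauc$, that is, uniformly on compact subsets of $\Omega$, by $\tauc$-strong continuity, while $\sup_\alpha\|g_\alpha\|_\infty\le 2\sup_{s\in[0,t_0]}\|T(s)f\|_\infty=:M_f<\infty$ by the local norm-boundedness of the orbits of $T$. Choosing, for a given $\varepsilon>0$, a compact $K$ as above for $\KKK$, one estimates $\bigl|\int_\Omega g_\alpha\dd\nu_\alpha\bigr|\le\bigl(\sup_{x\in K}|g_\alpha(x)|\bigr)\sup_{\mu\in\KKK}\|\mu\|+M_f\varepsilon$, and letting $\alpha$ run (so that $\sup_{x\in K}|g_\alpha(x)|\to0$) and then $\varepsilon\to0$ shows that this term tends to $0$ as well. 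Hence $\Phi$ is continuous.

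Since $[0,t_0]\times\KKK$ is compact---a product of a compact interval and a weak$^*$-compact set---and $\Phi$ is continuous, the set in question equals $\Phi([0,t_0]\times\KKK)$ and is therefore weak$^*$-compact, in particular relatively weak$^*$-compact, in $\MM(\Omega)$; a second application of Prokhorov's theorem (the necessity of tightness, which genuinely uses that $\Omega$ is Polish) then gives that it is uniformly tight, which is the assertion. I expect the main obstacle to be the continuity of $\Phi$, and within it the $t$-variable: one has only $\tauc$-convergence $T(t_\alpha)f\to T(t)f$ and never norm-convergence, so the uniform tightness of $\KKK$ is indispensable for controlling the tails $\int_{\Omega\setminus K}(T(t_\alpha)f-T(t)f)\dd\nu_\alpha$, and in addition one needs the orbits $\{T(s)f:s\in[0,t_0]\}$ to be norm-bounded. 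A secondary technical point is that Prokhorov's theorem is applied to \emph{signed} measures, which is reduced to the classical positive case by passing to the total variations, the family $\{|\nu|:\nu\in\KKK\}$ being uniformly tight exactly when $\KKK$ is.
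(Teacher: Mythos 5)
Your overall strategy --- show that $\Phi(t,\nu)\defeq T\badj(t)\nu$ is jointly continuous from $[0,t_0]\times\KKK$ into $(\MM(\Omega),\sigma(\MM(\Omega),\Cb(\Omega)))$, conclude that the orbit set is the continuous image of a compact set, and then convert weak$^*$-compactness into uniform tightness --- is the natural route (the paper itself gives no proof of this lemma but refers to \cite{Fa04b}), and the continuity argument, splitting the tail over $\Omega\setminus K$ by means of the uniform tightness of $\KKK$, is carried out correctly. The genuine gap lies in the step you dismiss as a secondary technical point: the implication ``norm-bounded and $\sigma(\MM(\Omega),\Cb(\Omega))$-compact $\Rightarrow$ uniformly tight'' for \emph{signed} measures cannot be reduced to the classical positive case ``by passing to the total variations.'' The map $\nu\mapsto|\nu|$ is not $\sigma(\MM(\Omega),\Cb(\Omega))$-continuous (on $[0,1]$ one has $\delta_{1/n}-\delta_0\to 0$ while $|\delta_{1/n}-\delta_0|\to 2\delta_0$), so neither the compactness of $\KKK$ nor that of $\Phi([0,t_0]\times\KKK)$ yields any relative narrow compactness of the family of total variations; but relative narrow compactness of $\{|\nu|\}$ is precisely what the positive Prokhorov converse would need, so the proposed reduction is circular. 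You use this fact twice (for $\KKK$ at the outset, to get the tightness needed in the continuity proof, and for the image at the end), and it is really the substance of the lemma. The fact is true for Polish $\Omega$, but it must be invoked in its genuinely signed form: either via Sentilles' results that $(\Cb(\Omega),\beta_0)$ is a strong Mackey space, so that $\sigma(\MM(\Omega),\Cb(\Omega))$-compact sets are $\beta_0$-equicontinuous, combined with his identification of the $\beta_0$-equicontinuous subsets of $\MM(\Omega)$ as exactly the norm-bounded, uniformly tight ones \cite{sentilles:1972}, or via a signed-measure version of Prokhorov's compactness theorem (e.g.\ Bogachev, \emph{Measure Theory}, Vol.~II). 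Note that Theorem \ref{thm:sent1}.c) as quoted in the paper covers only null \emph{sequences} and hence does not suffice by itself.

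A second, smaller point: your bound $M_f=2\sup_{s\in[0,t_0]}\|T(s)f\|_\infty<\infty$ appeals to ``local norm-boundedness of the orbits of $T$,'' which is not among the hypotheses of the lemma as stated; a $\tauc$-continuous orbit need not be sup-norm bounded, since $\tauc$-compact sets are bounded only uniformly on compacta (think of bumps of growing height escaping every compact set). In all applications in the paper $T$ is locally norm-bounded (see Proposition \ref{p:cbsg2}, and bi-continuous semigroups are locally norm-bounded by definition), and the same assumption is present in the source \cite{Fa04b}, so this is harmless in context, but you should either state it explicitly as a hypothesis or justify it, because your tail estimate over $\Omega\setminus K$ genuinely requires some such bound. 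Likewise, when you apply the tightness criterion to the image you also need its norm-boundedness; this follows from pointwise boundedness on the Banach space $\Cb(\Omega)$ via the uniform boundedness principle, or directly from $\|T\badj(t)\nu\|\leq\|T(t)\|\,\|\nu\|$ once local norm-boundedness of $T$ is assumed.
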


The next proposition  is also taken from \cite{Fa04b}. We repeat it here with a slight modification and the additional assertion concerning $\beta_0$-continuity.
\begin{proposition}\label{p:cbsg2}
Let  $T:\RR_+\to \LL(\Cb(\Omega))$ be $\tauc$-strongly continuous and locally norm-bounded.
 Suppose that $T(t)$ takes norm-bounded $\tauc$-null sequences into $\tauc$-null sequences. Then for all compact sets $K\subseteq \Omega$ and $\varepsilon>0$, there exists $M>0$ and $K'\subseteq \Omega$ compact such that
 $$
\sup_{x\in K}\bigl|(T(t)f)(x)\bigr|\leq M\sup_{x\in K'} \bigl|f(x)\bigr|+\varepsilon\|f\|_\infty\quad\mbox{}
 $$
 holds uniformly for $t$ in compact intervals of $\RR_+$.
In particular, it is locally-$\beta_0$-equi\-con\-ti\-nuous, or, which the same, it is $\tauc$-bi-equi\-con\-ti\-nuous.
\end{proposition}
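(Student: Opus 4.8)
The plan is to obtain the displayed inequality by feeding the tightness statement of Lemma~\ref{lem:3.2} the point masses sitting over $K$. Fix a compact $K\subseteq\Omega$, a number $\varepsilon>0$ and a compact interval $[0,t_0]\subseteq\RR_+$; these will determine the constant $M$ and the compact set $K'$. Set $M\defeq\sup_{t\in[0,t_0]}\|T(t)\|<\infty$ by local norm-boundedness. The hypothesis that each $T(t)$ maps norm-bounded $\tauc$-null sequences into $\tauc$-null sequences is exactly $\tauc$-sequential continuity on norm-bounded sets, so (as recalled before Lemma~\ref{lem:3.2}) the operators $T\badj(t)$ are well defined on $\Cb(\Omega)\badj=\MM(\Omega)$, and $T$ satisfies the hypotheses of Lemma~\ref{lem:3.2}.

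First I would note that the evaluation map $\Omega\ni x\mapsto\delta_x\in\MM(\Omega)$ is continuous for $\sigma(\MM(\Omega),\Cb(\Omega))$, because $x\mapsto\langle\delta_x,f\rangle=f(x)$ is continuous for each $f\in\Cb(\Omega)$; hence $\KKK\defeq\{\delta_x:x\in K\}$ is a norm-bounded ($\|\delta_x\|=1$) weak$^*$-compact subset of $\MM(\Omega)$. Lemma~\ref{lem:3.2} then gives that $\{T\badj(t)\delta_x:t\in[0,t_0],\,x\in K\}$ is tight; combined with $\|T\badj(t)\delta_x\|\leq\|T(t)\|\leq M$, tightness produces a compact $K'\subseteq\Omega$ with $|T\badj(t)\delta_x|(\Omega\setminus K')\leq\varepsilon$ for all $t\in[0,t_0]$, $x\in K$. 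Now for $f\in\Cb(\Omega)$, $x\in K$, $t\in[0,t_0]$, write $(T(t)f)(x)=\langle\delta_x,T(t)f\rangle=\langle T\badj(t)\delta_x,f\rangle=\int_\Omega f\dd(T\badj(t)\delta_x)$ and split the integral over $K'$ and over $\Omega\setminus K'$: the first piece is $\leq M\sup_{x\in K'}|f(x)|$ (since $|T\badj(t)\delta_x|(K')\leq M$) and the second is $\leq\varepsilon\|f\|_\infty$. Taking the supremum over $x\in K$ yields the asserted inequality, uniformly for $t\in[0,t_0]$.

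For the final clause, the inequality says that on each norm ball $\{\,\|f\|_\infty\leq r\,\}$ the family $\{T(t):t\in[0,t_0]\}$ is $\tauc$-equicontinuous: given $p_K$ and $\delta>0$, apply it with $\varepsilon=\delta/(2r)$ to dominate $p_K(T(t)f)$ by $M\,p_{K'}(f)+\delta/2$. Since each $T(t)$ carries norm balls into norm balls and, by the end of Section~\ref{s:intro}, $\beta_0=\taum$ agrees with $\tauc$ on norm-bounded sets, this is exactly local $\beta_0$-equicontinuity; equivalently, by Lemma~\ref{lem:mixedconv}, for a norm-bounded $\tauc$-null sequence $(f_n)$ one has $\sup_{t\in[0,t_0]}p_K(T(t)f_n)\leq M\,p_{K'}(f_n)+\varepsilon\sup_m\|f_m\|_\infty$, whose $\limsup_n$ is at most $\varepsilon\sup_m\|f_m\|_\infty$ and hence $0$ --- i.e.\ $\tauc$-bi-equicontinuity. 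The only non-routine point is the opening move: verifying that $\{\delta_x:x\in K\}$ is a norm-bounded weak$^*$-compact set to which Lemma~\ref{lem:3.2} applies, and that $T\badj(t)\delta_x$ is represented by an honest Baire measure on $\Omega$; after that the argument is merely an integral split, with all the real work residing in the tightness Lemma~\ref{lem:3.2}.
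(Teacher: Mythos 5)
Your proof is correct and is essentially the paper's own argument: apply Lemma~\ref{lem:3.2} to the weak$^*$-compact set of Dirac measures $\{\delta_x:x\in K\}$, obtain the compact set $K'$ from tightness, and split $\int_\Omega f\dd(T\badj(t)\delta_x)$ over $K'$ and $\Omega\setminus K'$ to get the two terms $M\sup_{K'}|f|$ and $\varepsilon\|f\|_\infty$. The only difference is that you spell out details the paper leaves implicit (the weak$^*$-compactness of the Dirac set and the derivation of the final equicontinuity clause), which is fine.
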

\begin{proof} Let $\varepsilon>0$, $t_0>0$ and $K\subseteq \Omega$ be a compact set. Take a compact set $K'\subseteq \Omega$ such that
$ |T\badj(t)\delta_x|(\Omega\setminus K')\leq\varepsilon $ for all $t\in[0,t_0]$ and $x\in K$. Such
compact set exists by Lemma \ref{lem:3.2}. We then obtain
\begin{equation*}
\begin{split}
& \sup_{x\in K}|T(t)f(x)|=\sup_{x\in K}\biggl|\int_\Omega f\dd  T\badj(t)\delta_x\biggr|\leq \sup_{x\in K}\int_{K'} \hskip-2pt\left|f\right|\dd |T\badj(t)\delta_x|+\sup_{x\in K}\hskip-0.8em \int_{\Omega\setminus K'} \hskip-0.8em\left|f\right|\dd |T\badj(t)\delta_x|\leq\\
&\quad\leq \sup_{t\in [0,t_0]}\|T(t)\|\cdot\sup_{x\in
K'}|f(x)|+\varepsilon\|f\|,
\end{split}
\end{equation*}
which is the assertion.
\end{proof}
A variant of this result has been obtained independently by M.~Kunze, see \cite[Theorem 4.4]{kunze2}.
\noindent Let us summarise the above.

\begin{theorem}\label{thm:important}
Let $\Omega$ be a Polish space and a consider the set $\mathscr{F}$ of  $\tauc$-strongly continuous semigroups $(T(t))_{t\geq 0}$ on $\Cb(\Omega)$  for which each $T(t)$ is $\tauc$-sequentially continuous on sup-norm bounded sets. Then the class of $\tauc$-bi continuous semigroups and the class of $\beta_0$-locally equi\-con\-ti\-nuous, $\beta_0$-strongly continuous  semigroups both coincide with $\mathscr{F}$.
\end{theorem}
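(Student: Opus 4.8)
The plan is to establish the two set-equalities in a cycle of inclusions, using Proposition \ref{prop:mixsg} and Proposition \ref{p:cbsg2} as the main engines. Recall that by Proposition \ref{prop:mixsg}, applied with $\tau = \tauc$ and hence $\taum = \beta_0$, the class of $\tauc$-bi-continuous semigroups coincides with the class of $\beta_0$-strongly continuous, locally sequentially $\beta_0$-equicontinuous semigroups. So what really has to be shown is: (1) every member of $\mathscr{F}$ is $\tauc$-bi-continuous (equivalently, $\beta_0$-strongly continuous and locally sequentially $\beta_0$-equicontinuous); and (2) every $\beta_0$-strongly continuous, $\beta_0$-locally equicontinuous semigroup lies in $\mathscr{F}$. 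Together with Proposition \ref{prop:mixsg} these two inclusions close the loop and give all three classes equal.

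First I would handle $(1)$. Let $(T(t))_{t\geq 0}\in\mathscr{F}$. By definition each $T(t)$ is $\tauc$-sequentially continuous on sup-norm bounded sets, and the orbits are $\tauc$-continuous; moreover $\tauc$-strong continuity on the Banach space $\Cb(\Omega)$ together with the uniform boundedness principle (applied via the norming dual pair, exactly as in the standard proof that bi-continuous semigroups are exponentially bounded — cf.\ K\"uhnemund \cite{Ku01}) forces $t\mapsto T(t)$ to be locally norm-bounded. Hence the hypotheses of Proposition \ref{p:cbsg2} are met, and that proposition yields precisely that $(T(t))_{t\geq 0}$ is $\tauc$-bi-equicontinuous, i.e.\ locally-$\beta_0$-equicontinuous. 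Combined with the $\beta_0$-strong continuity of the orbits (which follows from $\tauc$-strong continuity plus local norm-boundedness via Lemma \ref{lem:mixedconv}), this shows $(T(t))_{t\geq 0}$ is $\beta_0$-strongly continuous and locally $\beta_0$-equicontinuous, and, by Proposition \ref{prop:mixsg}, $\tauc$-bi-continuous. This gives $\mathscr{F}\subseteq\{\tauc\text{-bi-continuous}\}$ and $\mathscr{F}\subseteq\{\beta_0\text{-locally equicontinuous, }\beta_0\text{-strongly continuous}\}$.

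For $(2)$, and for the reverse inclusions, I would argue as follows. If $(T(t))_{t\geq 0}$ is $\tauc$-bi-continuous, then each $T(t)$ is $\tauc$-sequentially continuous on norm-bounded sets (this is immediate from local bi-equicontinuity, taking a single operator $T(t)$), and the orbits are $\tauc$-strongly continuous by definition; so $(T(t))_{t\geq 0}\in\mathscr{F}$, giving $\{\tauc\text{-bi-continuous}\}\subseteq\mathscr{F}$. Similarly, if $(T(t))_{t\geq 0}$ is $\beta_0$-strongly continuous and $\beta_0$-locally equicontinuous, then each $T(t)$ is $\beta_0$-continuous, hence $\beta_0$-sequentially continuous, hence (since $\beta_0$ agrees with $\tauc$ on norm-bounded sets, and norm-bounded $\tauc$-convergence is the same as $\beta_0$-convergence by Lemma \ref{lem:mixedconv}) $\tauc$-sequentially continuous on norm-bounded sets; and the $\beta_0$-continuous orbits are $\tauc$-continuous. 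Thus this class is also contained in $\mathscr{F}$. Assembling: $\mathscr{F}\subseteq\{\tauc\text{-bi-cont.}\}\subseteq\mathscr{F}$ and $\mathscr{F}\subseteq\{\beta_0\text{-loc.\ equicont., }\beta_0\text{-strongly cont.}\}\subseteq\mathscr{F}$, so all three classes coincide.

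The only genuinely substantive point is verifying that every semigroup in $\mathscr{F}$ is automatically locally norm-bounded, so that Proposition \ref{p:cbsg2} can be invoked — everything else is bookkeeping with Lemma \ref{lem:mixedconv} and Propositions \ref{prop:mixsg} and \ref{p:cbsg2}. I expect this to be the main obstacle: one must rule out, e.g., a $\tauc$-strongly continuous semigroup whose norms blow up on $[0,1]$. The argument is the familiar one — $\tauc$-strong continuity gives, for each $f$, that $\{T(t)f : t\in[0,1]\}$ is $\tauc$-bounded; since the dual $\MM(\Omega)$ is norming and $\Cb(\Omega)$ is a Banach space, a uniform boundedness argument upgrades this to $\sup_{t\in[0,1]}\|T(t)f\|<\infty$ for each $f$, and then a second application of uniform boundedness in $\LL(\Cb(\Omega))$ yields $\sup_{t\in[0,1]}\|T(t)\|<\infty$; exponential boundedness on $\RR_+$ then follows by the semigroup law. (This is exactly the reasoning used to prove condition (ii) in Definition \ref{def:bicont} is redundant, cf.\ \cite[Ku01]{Ku01}, \cite{farkas-bdd}.)
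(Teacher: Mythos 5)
Your overall architecture is the paper's own: the theorem is stated there as a summary of Proposition \ref{prop:mixsg} (bi-continuity $=$ $\beta_0$-strong continuity plus local sequential $\beta_0$-equicontinuity), Proposition \ref{p:cbsg2} (the upgrade to genuine local $\beta_0$-equicontinuity), and Theorem \ref{thm:sent1}, and your two ``easy'' inclusions (bi-continuous $\subseteq\mathscr{F}$ and $\beta_0$-strongly continuous, locally $\beta_0$-equicontinuous $\subseteq\mathscr{F}$) are handled correctly. The genuine gap is in the step you yourself identify as the crux: the claim that every member of $\mathscr{F}$ is automatically locally norm-bounded, proved by ``orbits are $\tauc$-bounded, and since $\MM(\Omega)$ is norming, uniform boundedness gives $\sup_{t\in[0,1]}\|T(t)f\|_\infty<\infty$.'' This argument fails. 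To apply Banach--Steinhaus to the functionals $T(t)f\in\MM(\Omega)'$ you need pointwise boundedness at \emph{every} $\mu\in\MM(\Omega)$, i.e.\ that the orbit is $\sigma(\Cb(\Omega),\MM(\Omega))$-bounded; $\tauc$-continuity of the orbit yields this only for measures with compact support, and pointwise boundedness on such a (in general proper, non-closed) subspace does not meet the hypotheses of the uniform boundedness principle. Nor is a $\tauc$-bounded (even $\tauc$-compact) subset of $\Cb(\Omega)$ norm-bounded: on $\Omega=\RR$, bumps of height $n$ centred at $n$ form a $\tauc$-null, norm-unbounded sequence. Finally, the result you appeal to does not exist: in \cite{Ku01} and \cite{farkas-bdd} the exponential boundedness in Definition \ref{def:bicont}(ii) is a standing assumption, not a consequence of the other conditions.

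Note also that the paper does not derive this boundedness either: Proposition \ref{p:cbsg2} carries ``locally norm-bounded'' as an explicit hypothesis, so the intended reading of Theorem \ref{thm:important} keeps that boundedness in play rather than deducing it from $\tauc$-strong continuity alone. If you do want boundedness from the hypotheses of $\mathscr{F}$, the uniform boundedness principle has to be applied the other way around: for fixed $f$, $\tauc$-strong continuity makes $(t,x)\mapsto(T(t)f)(x)$ jointly continuous, hence bounded on $[0,t_0]\times K$ for every compact $K$; Banach--Steinhaus on the Banach space $\Cb(\Omega)$, applied to the functionals $T\badj(t)\delta_x=\delta_x\circ T(t)$ with $(t,x)\in[0,t_0]\times K$, then gives $\sup\{\|T\badj(t)\delta_x\|:t\in[0,t_0],\,x\in K\}<\infty$, which is the constant actually used in the proof of Proposition \ref{p:cbsg2} (together with the tightness of Lemma \ref{lem:3.2}). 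Once local $\beta_0$-equicontinuity is established this way, each orbit $\{T(t)f:t\in[0,t_0]\}$ is $\beta_0$-bounded, hence norm-bounded because the mixed topology has the same bounded sets as the norm (Wiweger \cite{wiweger:1961}, Sentilles \cite{sentilles:1972}), and a final application of Banach--Steinhaus in $\LL(\Cb(\Omega))$ yields $\sup_{t\in[0,t_0]}\|T(t)\|<\infty$. As written, however, your boundedness step is the load-bearing one, and it does not hold up.
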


\subsection*{Bi-continuous semigroups on $\MM(\Omega)$}
We now study the adjoint of a $\tauc$-bi-continuous semigroup. To do that we have to verify Hypotheses \ref{hyp:2} and \ref{hyp:3}. The validity of Hypothesis \ref{hyp:2}, i.e., that  $\MM(\Omega)$ is $\sigma(\MM(\Omega),\Cb(\Omega))$-sequentially
complete if $\Omega$ is a Polish space, was already known to Alexandroff \cite{aleksandrov:1943}. Hypothesis \ref{hyp:3} is satisfied by Theorem  \ref{thm:sent1}.c). Thus the abstract results from Section \ref{s:adj} yield the following (recall: we abbreviate $\tau\badj:=\sigma(\Cb(\Omega),\MM(\Omega)$).

\begin{theorem}
Let $\Omega$ be a Polish space and $(T(t))_{t\geq 0}$ a $\tauc$-bi-continuous semigroup on $\Cb(\Omega)$. Then the semigroup $(T(t)\badj)_{t\geq 0}$ defined as $T(t)\badj:=T(t)'|_{\MM(\Omega)}$ is a $\tau\badj$-bi-continuous semigroup on the space of bounded Baire measures $\MM(\Omega)$ .
\end{theorem}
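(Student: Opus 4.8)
The plan is to verify that the three hypotheses needed for the abstract adjoint construction of Section~\ref{s:adj} hold in this concrete situation, and then simply invoke the corresponding proposition. Concretely, we take $X=\Cb(\Omega)$ with $\tau=\tauc$; Hypothesis~\ref{hyp:1} was already observed to hold in Example~\ref{ex:cb}. By Theorem~\ref{thm:sent1}.b) a linear functional on $\Cb(\Omega)$ is $\beta_0$-continuous iff it is $\beta_0$-sequentially continuous, and by the discussion preceding that theorem these functionals are exactly the elements of $\MM(\Omega)$; since $\beta_0=\taum$ coincides with $\tauc$ on norm-bounded sets, $\tauc$-sequential continuity on norm-bounded sets of a norm-bounded functional is the same as $\beta_0$-sequential continuity, hence $\Cb(\Omega)\badj=\MM(\Omega)$. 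So it remains only to check Hypotheses~\ref{hyp:2} and~\ref{hyp:3} for the pair $(X\badj,\tau\badj)=(\MM(\Omega),\sigma(\MM(\Omega),\Cb(\Omega)))$.

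For Hypothesis~\ref{hyp:2} I would cite that the norm-bounded subsets of $\MM(\Omega)$ are $\sigma(\MM(\Omega),\Cb(\Omega))$-sequentially complete when $\Omega$ is Polish — this is a classical result going back to Alexandroff \cite{aleksandrov:1943} (a weak$^*$-Cauchy sequence of bounded Baire measures on a Polish space converges to a bounded Baire measure; one checks countable additivity of the limit set function using a tightness/uniform-countable-additivity argument, e.g.\ via a Vitali--Hahn--Saks type statement, which is exactly the content needed for the closed unit ball to be sequentially complete). For Hypothesis~\ref{hyp:3}, which asks that every norm-bounded $\sigma(\MM(\Omega),\Cb(\Omega))$-null sequence $(\varphi_n)\subset\MM(\Omega)$ be $\tauc$-equicontinuous on norm-bounded sets, I would invoke Theorem~\ref{thm:sent1}.c): such a sequence is $\beta_0$-equicontinuous, and since $\beta_0$ agrees with $\tauc$ on norm-bounded sets, $\beta_0$-equicontinuity restricted to those sets is precisely $\tauc$-equicontinuity on norm-bounded sets, which is what Hypothesis~\ref{hyp:3} demands.

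With all three hypotheses in place, the semigroup $(T(t)\badj)_{t\geq 0}$ with $T(t)\badj=T(t)'|_{\MM(\Omega)}$ is well defined (each $T(t)$ is norm-bounded and $\tauc$-sequentially continuous on norm-bounded sets, so by the proposition just before Hypothesis~\ref{hyp:3} its Banach-space adjoint leaves $\MM(\Omega)=\Cb(\Omega)\badj$ invariant), it is trivially a semigroup, it is $\tau\badj$-strongly continuous and exponentially bounded by construction, and its local $\tau\badj$-bi-equicontinuity follows from the proof of the last proposition of Section~\ref{s:adj}: for $\mu\in\MM(\Omega)$ the orbit $\{T(t)'\mu:t\in[0,t_0]\}$ is wait — rather, for $f\in\Cb(\Omega)$ the set $\{T(t)f:t\in[0,t_0]\}$ is $\tauc$-compact, so for a norm-bounded $\tau\badj$-null sequence $(\varphi_n)$, $\tauc$-equicontinuous by Hypothesis~\ref{hyp:3}, the evaluations $[T(t)\badj\varphi_n](f)=\varphi_n(T(t)f)$ tend to $0$ uniformly on $[0,t_0]$. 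Hence $(T(t)\badj)_{t\geq0}$ is $\tau\badj$-bi-continuous.

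The only genuine obstacle is Hypothesis~\ref{hyp:2}: unlike the other two items it is not a restatement of a property quoted verbatim in the excerpt, and the sequential weak$^*$-completeness of $\MM(\Omega)$ for Polish $\Omega$ is a real theorem (it fails for general completely regular $\Omega$, which is consistent with the paper's later remark that the Polish assumption cannot be dropped). Everything else is bookkeeping: matching "$\tauc$ on norm-bounded sets" with "$\beta_0$" and quoting Theorem~\ref{thm:sent1} together with the abstract machinery of Section~\ref{s:adj}.
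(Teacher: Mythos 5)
Your proposal is correct and follows exactly the paper's route: the paper also proves this theorem by verifying Hypothesis~\ref{hyp:2} via Alexandroff's sequential completeness of $\MM(\Omega)$ for $\sigma(\MM(\Omega),\Cb(\Omega))$ when $\Omega$ is Polish, verifying Hypothesis~\ref{hyp:3} via Theorem~\ref{thm:sent1}.c), and then invoking the abstract adjoint construction of Section~\ref{s:adj} (with $\Cb(\Omega)\badj=\MM(\Omega)$ coming from Theorem~\ref{thm:sent1}.b)). Your extra bookkeeping identifying ``$\tauc$ on norm-bounded sets'' with $\beta_0$, and the final equicontinuity-plus-compact-orbit argument, is precisely what the paper's abstract proposition does, so there is nothing to add.
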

It is little surprising that the converse of this statement is also true:
\begin{theorem}
Let $\Omega$ be a Polish space. Let $(S(t))_{t\geq 0}$ be $\tau\badj$-bi-continuous semigroup on the space $\MM(\Omega)$. Then there is a $\tauc$-bi-continuous semigroup $(T(t))_{t\geq 0}$ on $\Cb(\Omega)$ with $T\badj(t)= S(t)$.
\end{theorem}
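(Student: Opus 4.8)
The plan is to construct the pre-adjoint semigroup on $\Cb(\Omega)$ by taking, for each $t$, the Banach-space adjoint $S(t)'$ acting on $\MM(\Omega)' = \Cb(\Omega)''$ and showing that it leaves the canonical copy of $\Cb(\Omega)$ invariant. First I would recall that, since $(S(t))_{t\geq 0}$ is $\tau\badj$-bi-continuous on $\MM(\Omega)$, each $S(t)$ is in particular $\sigma(\MM(\Omega),\Cb(\Omega))$-continuous on norm-bounded sets; by Theorem \ref{thm:sent1}.b) applied on the predual side (more precisely, by the analogue of Proposition \ref{p:invari} identifying $(\MM(\Omega),\tau\badj)\badj$), the functionals on $\MM(\Omega)$ that are $\tau\badj$-sequentially continuous on norm-bounded sets are exactly the evaluations $\nu \mapsto \int f\dd\nu$ with $f\in\Cb(\Omega)$. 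Hence $S(t)'$ maps $\Cb(\Omega)$ (viewed inside $\MM(\Omega)'$) into itself, and we \emph{define} $T(t)\defeq S(t)'|_{\Cb(\Omega)}$. By bipolar/duality bookkeeping $T(t)\badj = S(t)$ and $(T(t))_{t\geq 0}$ is a semigroup of norm-bounded operators, norm-exponentially bounded because $(S(t))_{t\geq 0}$ is.

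Next I would verify the three defining properties of a $\tauc$-bi-continuous semigroup for $(T(t))_{t\geq 0}$. The norm-boundedness on $[0,1]$ is inherited from $(S(t))_{t\geq 0}$. For $\tauc$-strong continuity: fix $f\in\Cb(\Omega)$; for every $x\in\Omega$ we have $(T(t)f)(x) = \la T(t)f, \delta_x\ra = \la f, S(t)\delta_x\ra = \int f\dd S(t)\delta_x$, and since $t\mapsto S(t)\delta_x$ is $\tau\badj$-continuous this shows $t\mapsto (T(t)f)(x)$ is continuous for each $x$, i.e.\ $t\mapsto T(t)f$ is continuous for the topology of pointwise convergence; to upgrade this to $\tauc$-continuity (uniform on compacta) one uses local norm-boundedness together with the fact that on norm-bounded sets $\tauc$ agrees with $\taum=\beta_0$ and with the relevant restriction topologies — the standard Arzelà–Ascoli type argument combined with Lemma \ref{lem:3.2}'s tightness will give equicontinuity of the family $\{S(t)\delta_x : t\in[0,t_0], x\in K\}$ on the predual, hence uniform convergence of $(T(t)f)(\cdot)$ on $K$.

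Then I would check local $\tauc$-bi-equicontinuity: let $(f_n)\subseteq\Cb(\Omega)$ be a norm-bounded $\tauc$-null sequence; I must show $T(t)f_n \to 0$ in $\tauc$ uniformly on $[0,t_0]$. Fix a compact $K\subseteq\Omega$ and $\varepsilon>0$. By Lemma \ref{lem:3.2} applied with $\KKK = \{\delta_x : x\in K\}$ (which is norm-bounded and weak$^*$-compact) there is a compact $K'\subseteq\Omega$ with $|S(t)\delta_x|(\Omega\setminus K') \leq \varepsilon$ for all $t\in[0,t_0]$, $x\in K$. Exactly as in the proof of Proposition \ref{p:cbsg2} we then estimate
\begin{equation*}
\sup_{x\in K}|(T(t)f_n)(x)| \leq \sup_{t\in[0,t_0]}\|S(t)\|\cdot\sup_{x\in K'}|f_n(x)| + \varepsilon\,\sup_k\|f_k\|_\infty,
\end{equation*}
and since $f_n\to 0$ uniformly on the compact set $K'$ the first term tends to $0$; as $\varepsilon$ is arbitrary and the $f_n$ are uniformly bounded this gives $p_K(T(t)f_n)\to 0$ uniformly in $t\in[0,t_0]$, which is the required local bi-equicontinuity.

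The main obstacle I anticipate is \emph{applying Lemma \ref{lem:3.2} in this direction}: that lemma is stated for the adjoint $T\badj(t)$ of a given semigroup on $\Cb(\Omega)$, whereas here $(S(t))_{t\geq 0}$ is the object given in advance on $\MM(\Omega)$. One must therefore observe that $S(t) = T(t)\badj$ with $(T(t))_{t\geq 0}$ the semigroup just constructed, and that this $(T(t))_{t\geq 0}$ already satisfies the hypotheses of Lemma \ref{lem:3.2} — namely $\tauc$-strong continuity and $\tauc$-sequential continuity on norm-bounded sets of each $T(t)$. The latter is the delicate point and is precisely where Theorem \ref{thm:sent1} (parts b) and c)) is needed: $\tau\badj$-continuity of $S(t)$ on norm-bounded sets of $\MM(\Omega)$, transported through the duality, yields $\tauc$-sequential continuity on norm-bounded sets of $T(t)$, so the circle closes and Lemma \ref{lem:3.2} becomes available. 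Once this bootstrap is in place, the three verifications above go through routinely and the theorem follows; the identity $T\badj(t) = S(t)$ is then immediate from the construction $T(t) = S(t)'|_{\Cb(\Omega)}$.
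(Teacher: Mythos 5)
There is a genuine circularity in your treatment of $\tauc$-strong continuity, and this is the main gap. Lemma \ref{lem:3.2} has as a \emph{hypothesis} that $T:\RR_+\to\LL(\Cb(\Omega))$ is $\tauc$-strongly continuous, yet you propose to use its tightness conclusion to obtain precisely the uniform-on-compacta convergence that constitutes the $\tauc$-strong continuity of the semigroup you are constructing. Your closing paragraph only supplies the other hypothesis of the lemma ($\tauc$-sequential continuity of each $T(t)$ on norm-bounded sets), so the circle does not in fact close; nor can you obtain the tightness of $\{S(t)\delta_x:\, t\in[0,t_0],\ x\in K\}$ cheaply from $(S(t))_{t\geq0}$ alone, since no positivity of $S(t)$ is assumed and Prokhorov-type converses are not available off the shelf for signed measures. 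The paper avoids tightness entirely at this step: it argues by contradiction, choosing $x_n\in K$ with $|(T(t_n)f)(x_n)-f(x_n)|>\varepsilon$, extracting $x_n\to x$, and then using the local $\tau\badj$-bi-equicontinuity of $(S(t))_{t\geq0}$ on the norm-bounded $\tau\badj$-null sequence $\delta_{x_n}-\delta_x$ together with the $\tau\badj$-continuity of $t\mapsto S(t)\delta_x$ at $t=0$. Likewise, the step you describe as ``transported through the duality'' (each $T(t)$ maps bounded $\tauc$-null sequences to $\tauc$-null sequences) needs the concrete argument the paper gives: along $x_n\to x$ in $K$ the sequence $S(t)\delta_{x_n}$ is $\tau\badj$-convergent, hence $\beta_0$-equicontinuous by Theorem \ref{thm:sent1}.c), and Schaefer III.4.5 then yields the uniformity on $K$. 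Once strong continuity and this sequential continuity are in hand, local bi-equicontinuity is automatic from Theorem \ref{thm:important}; your third paragraph in effect reproves Proposition \ref{p:cbsg2}, which is harmless but unnecessary.

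A second, smaller gap lies in your very first step. Defining $T(t)\defeq S(t)'|_{\Cb(\Omega)}$ presupposes that the functionals on $\MM(\Omega)$ which are $\tau\badj$-sequentially continuous on norm-bounded sets are exactly the evaluations against elements of $\Cb(\Omega)$. You attribute this to Theorem \ref{thm:sent1}.b) ``on the predual side'' and to Proposition \ref{p:invari}, but neither says this: the former concerns functionals and operators on $\Cb(\Omega)$, not on $\MM(\Omega)$, and the latter only asserts that $X\badj$ is a closed subspace of $X'$, with no identification. The identification is in fact true for Polish $\Omega$, but it needs its own proof, e.g.\ $g(x)\defeq\psi(\delta_x)$ lies in $\Cb(\Omega)$ and $\psi(\nu)=\int_\Omega g\dd\nu$ follows because every probability measure on a Polish space is the $\tau\badj$-limit of a norm-bounded sequence of finitely supported measures (then use the Jordan decomposition and linearity). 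The paper sidesteps this by defining $(T(t)f)(x)\defeq\int_\Omega f\dd (S(t)\delta_x)$ directly: boundedness comes from $\|S(t)\|$, continuity of $T(t)f$ from the sequential $\tau\badj$-continuity of $S(t)$ applied to $\delta_{x_n}\to\delta_x$, and the identity $T(t)\badj=S(t)$ is only checked at the end, where the same discrete-approximation remark is what makes ``by construction'' legitimate. So either prove the identification you invoke, or adopt the pointwise definition and restructure the argument as the paper does.
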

\begin{proof} For $f\in \Cb(\Omega)$ we set $$(T(t)f)(x):=\int_\Omega f\dd (S(t)\delta_x),$$
where $\delta_x$ denotes the Dirac-measure at the point $x\in \Omega$. We then have  $$\sup |(T(t)f)(x)|\leq \|S(t)\|\cdot \|f\|_\infty,$$ so $T(t)f$ is a bounded function. If $x_n\to x$ in $\Omega$ for $n\to \infty$, then $\delta_{x_n}\to \delta_x$ in $\MM(\Omega)$ with respect to $\tau\badj$. Since $S(t)$ is $\tau\badj$-continuous on $\MM(\Omega)$, we have $S(t)\delta_{x_n}\to S(t)\delta_x$ and  whence the continuity of  $T(t)f$ follows. Altogether we obtain that $T(t)\in \LL(\Cb(\Omega))$. Obviously $(T(t))_{t\geq 0}$ is an exponentially bounded semigroup. We have to show that for each fixed $t\geq 0$ the operator is $\tauc$-bi-continuous, and that the semigroup $(T(t))_{t \geq 0}$ is $\tauc$-strongly continuous. Then by Theorem  \ref{thm:important}  $(T(t))_{t\geq 0}$ is a $\tauc$-bi-continuous semigroup, and by construction $T(t)\badj=S(t)$ holds.

\medskip\noindent We first prove that for $t>0$ fixed $T(t)$ is $\tauc$-bi-continuous. Assume the contrary, i.e., that there is a sup-norm bounded sequence $f_n\in \Cb(\Omega)$ $\tauc$-convergent to $0$ (i.e.~$\beta_0$-convergent to $0$), a compact set $K\subseteq \Omega$ and $\varepsilon>0$ such that
$$
\sup_{x\in K}\bigl|(T(t)f_n)(x)\bigr|>\varepsilon\quad\mbox{for all $n\in \NN$}.
$$
For each $n\in \NN$ take a point $x_n\in K$ with
 $$
\bigl |(T(t)f_n)(x_n)\bigr|>\varepsilon.
 $$We can suppose by compactness that $x_n\to x$ for some $x\in K$. Then $\delta_{x_n}\to \delta_x$ in $\tau\badj$, and we obtain that $S(t)\delta_{x_n}$ is a $\tau\badj$-convergent sequence. By Theorem \ref{thm:sent1}.c) this sequence is $\beta_0$-equicontinuous. So by Schaefer \cite[Sec.~III.4.5]{schaefer:1999} we can deduce
$$
\bigl|(T(t)f_n)(x_n)\bigr|\leq \sup_{m\in \NN}\Bigl|\int_{\Omega}f_n\dd S(t)\delta_{x_m}\Bigr|\to 0\quad\mbox{for $n\to \infty$}.
$$
This is a contradiction.

\medskip\noindent To see the $\tauc$-strong continuity let $K\subseteq \Omega$ be a compact set. Assume by contradiction that there is $f\in \Cb(\Omega)$, $\varepsilon>0$  and $t_n\in [0,1]$ with $t_n\to 0$ for $n\to\infty$ such that
$$
\sup_{x\in K}\bigl|(T(t_n)f)(x)-f(x)\bigr|>\varepsilon\quad\mbox{for all $n\in \NN$}.
$$
For each $n\in \NN$ take point $x_n\in K$ with
$$
\Bigl|\int_\Omega f \dd (S(t_n)\delta_{x_n}-\delta_{x_n})\Bigr|=|(T(t_n)f)(x_n)-f(x_n)|>\varepsilon.
$$
By compactness we can pass to a subsequence and assume that $x_n$ converges to some $x\in K$. This means $\delta_{x_n}$ $\tau\badj$-converges to $\delta_x$. By the local $\tau\badj$-bi-equicontinuity of $(S(t))_{t\geq 0}$,  we must have
$$
\sup_{s\in[0,1]}\Bigl|\int_\Omega f \dd (S(s)\delta_{x_n}-\delta_{x_n})\Bigr|\to 0\quad\mbox{for $n\to\infty$},
$$
a contradiction.
\end{proof}

\section{Counterexamples}
\label{s:cnt}
A surprising fact is that though $\tauc$  is generally not metrisable, the continuity of norm-bounded linear operators on norm-bounded sets can be described by convergent sequences. It is clear that some kind of countability plays an important role here (cf.~metric or $\sigma$-compact spaces). Indeed, the simplest non-countable space gives rise to a counterexample to Theorem \ref{thm:important}, when $\Omega$ is not a Polish space.  More specifically we construct below a bi-continuous semigroup which is not is  $\beta_0$-locally-equi\-con\-ti\-nuous. For other illuminating, related examples we refer to Kunze \cite[Sec.~3]{kunze2}.

\begin{example}\label{ex:nonl}
Let $\Omega=\omega_1$ the first uncountable ordinal number and
$\upsilon$ be the order topology. Suppose
that $f_n\to 0$ in the topology $\tauc$. We claim that there exists
$\alpha\in\omega_1$ such that $f_n\lm 0$ uniformly on
$[\alpha,\omega_1)$.  Suppose the contrary, i.e., for all
$\alpha<\omega_1$ there exists $k\in\NN$, $k>0$ such that for all
$N\in\NN$ there exists $n\geq N$ and $x\in[\alpha,\omega_1)$ with
$|f_n(x)|>1/k$. For all $\alpha\in\omega_1$ we have $k_\alpha\in\NN$
and  we may assume that $k_{\alpha_\xi}=k$ for a cofinal sequence
$\alpha_\xi\in\omega_1$. By induction we choose a sequence
\begin{equation*}
x_{\alpha_{\xi_1}}< x_{\alpha_{\xi_1}}\ordplus 1<
x_{\alpha_{\xi_2}}<x_{\alpha_{\xi_2}}\ordplus
1<\cdots<x_{\alpha_{\xi_j}}<x_{\alpha_{\xi_j}}\ordplus1<\cdots
\end{equation*}
with $f_{n_j}(x_{\alpha_{\xi_j}})>1/k$. Since
$K\defeq\{\lim_{j\lm\infty}
x_{\alpha_{\xi_j}},\:x_{\alpha_{\xi_j}}:\:j\in\NN\}$ is compact and
\begin{equation*}
\sup_{y\in K}|f_{n_j}(y)|\geq\tfrac{1}{k}\quad\mbox{for all
$j\in\NN$},
\end{equation*}
we arrived to a contradiction. Thus we have the existence of
$\alpha\in\omega_1$ as asserted above. Now, consider the family
$\{[\xi,\omega_1):\:\xi>\alpha\}$, which has the finite intersection
property and thus by compactness possesses an accumulation point
$x\in\beta\Omega$. All $f_n$ extend to the Stone-\v Cech
compactification $\beta\Omega$ and $|f_n(y)|<\varepsilon$ for all
$y\in[\alpha,\omega_1)$ if $n\geq N$. Take $n\in \NN$, $n\geq N$. By
the  continuity of $f_n$ on $\beta\Omega$ we have a neighbourhood
$U$ of $x$ such that for all $y\in U$
\begin{equation*}
|f_n(y)-f_n(x)|\leq\varepsilon.
\end{equation*}
There exist $\xi\in(\alpha,\omega_1)$ with $\emptyset\neq U\cap
[\xi,\omega_1)\ni z$, so
\begin{equation*}
|f_n(x)|\leq |f_n(x)-f_n(z)|+|f_n(z)|\leq \varepsilon+\varepsilon.
\end{equation*}
Thus $f_n(x)\lm 0$, which shows that $\delta_x$ is
$\tauc$-sequentially-continuous (on norm-bounded sets). However, it
is clear that this is not $\tauc$-continuous on norm-bounded sets.

\medskip Consider now the $C_0$-semigroup $(T(t))_{t\geq 0}$ generated
by the bounded operator $A\defeq\mathbf{1}\otimes \delta_x$. Since $A$ is
idempotent the semigroup $T$ takes the form
\begin{equation*}
T(t)=\ee^{tA}=I-A+\ee^tA.
\end{equation*}
This semigroup is bi-continuous but none of $T(t)$, $t>0$ is $\tauc$-continuous on norm bounded sets.
\end{example}

We close this paper by the next counterexample complementing Example \ref{e:KS}.
\begin{example}
We present a $\sigma(E',E)$-bi-continuous semigroups on $X=E'$ that is not the adjoint of a strongly continuous semigroup on $E$ (where by Example \ref{e:KS} $E$ is a fortiori non-separable).
A Banach space $E$ said to have the \emph{Mazur property} if every weakly$^*$-sequentially continuous linear functional on $E$ is weakly$^*$ continuous. Not every Banach space has this property, for instance $E=\ell^\infty$ lacks it. (We refer to further details and examples, e.g., to Edgar \cite{Edgar}.)
Now let $E$ be a Banach space without the Mazur property and let $X:=E'$. Consider  a weakly$^*$-sequentially continuous functional $\varphi$ on $E'$ that is not weakly$^*$-continuous, and let $x\in E'$ be an element with $\varphi(x)=1$.  Set $A:=x\otimes \varphi$, which is obviously a bounded idempotent linear operator on $E'$. Now the semigroup with the asserted properties is given as in Example \ref{ex:nonl}: $T(t):=\ee^{tA}=I-A+\ee^{t}A$.
\end{example}

\section*{Acknowledgement}
The author is grateful to the anonymous referee for the careful reading,  constructive criticism and insightful remarks.
\def\cprime{$'$}
\providecommand{\bysame}{\leavevmode\hbox
to3em{\hrulefill}\thinspace}
\providecommand{\MR}{\relax\ifhmode\unskip\space\fi MR }
\providecommand{\MRhref}[2]{%
  \href{http://www.ams.org/mathscinet-getitem?mr=#1}{#2}
} \providecommand{\href}[2]{#2}

\parindent0pt

\medskip
{\em Author's address}:\\
{\em B\'alint Farkas}, Technische Universit\"at Darmstadt, Fachbereich Mathematik, Darmstadt, Germany, Schlossgartenstr. 7, D-64289.\\
 e-mail: \texttt{farkas@mathematik.tu-darmstadt.de}.

\end{document}